\documentclass[12pt]{amsart}
\usepackage[utf8]{inputenc}
\usepackage[total={6.5in,9in},top=1in, left=1in, right=1in, bottom=1in]{geometry}
\usepackage{fancyhdr}
\pagestyle{fancy}
\fancyhead{}
\fancyhead[CE]{{\bf {\scriptsize  DIONNE IBARRA, ALEX LANDRY, GABRIEL MONTOYA-VEGA, J\'{O}ZEF H. PRZYTYCKI}}}
\fancyhead[CO]{{\bf {\scriptsize Exploring unimodality of the plucking polynomial with delay function}}}
\fancyfoot{}
\fancyfoot[C]{{\scriptsize \thepage}}
\setlength{\footskip}{20pt}
\setlength\headheight{14pt}
\usepackage{libertine}
\usepackage[T1]{fontenc}
\usepackage{color,amsmath,amssymb,mathtools,graphicx}
\usepackage{bbold}
\usepackage{epsfig,fancyhdr,xcolor}
\usepackage{dsfont} 
\usepackage{graphicx,amsmath,mathtools,amssymb,float}
\usepackage{bm}
\usepackage{epsfig,color,fancyhdr,setspace}
\usepackage{epstopdf}
\usepackage{import}
\usepackage{lineno}
\usepackage{stackrel,dsfont}
\usepackage[allcolors = blue,colorlinks]{hyperref}
\usepackage[abs]{overpic}
\usepackage[center]{caption}
\captionsetup[subfigure]{labelfont=rm}

\usepackage{subcaption}
\usepackage{tikz-cd}
\usepackage{enumitem}
\usepackage{marvosym}
\usepackage[normalem]{ulem}

\newtheorem{theorem}{Theorem}[section]

\newtheorem{definition}[theorem]{Definition}
\newtheorem{example}[theorem]{Example}
\newtheorem{proposition}[theorem]{Proposition}
\newtheorem{remark}[theorem]{Remark}
\newtheorem{corollary}[theorem]{Corollary}
\newtheorem{conjecture}[theorem]{Conjecture}
\newtheorem{openproblem}[theorem]{Open Problem}

\newtheorem*{conjecture*}{Unimodality Conjecture}

\title{
Exploring unimodality of the plucking polynomial with delay function}

\author{Dionne Ibarra}
\address{School of Mathematics, Monash University, Australia}
\email{{\rm \textcolor{blue}{dionne.ibarra@monash.edu}}}

\author{Alex Landry}
\address{James Madison High School, VA, USA}
\email{{\rm \textcolor{blue}{alexc.landry@gmail.com}}}

\author{Gabriel Montoya-Vega}
\address{Department of Mathematics, The Graduate Center CUNY, NY, USA, and \newline \indent Department of Mathematics, University of Puerto Rico at R\'io Piedras, San Juan, PR}
\email{{\rm \textcolor{blue}{gabrielmontoyavega@gmail.com}}}

\author{J\'{o}zef H. Przytycki}
\address{Department of Mathematics, The George Washington University, Washington DC, USA, and \newline \indent Department of Mathematics, University of Gda\'{n}sk, Gda\'{n}sk, Poland}
\email{{\rm \textcolor{blue}{przytyck@gwu.edu}}}

\subjclass[2020]{Primary: 57K10 Secondary: 57M15}
\keywords{rooted trees, plucking polynomial, unimodality, knot theory.}

\begin{document}

\begin{abstract}
    The plucking polynomial is an invariant of rooted trees with connections to knot theory. The polynomial was constructed in 2014 as a tool to analyze lattice crossings after taking the quotient by the Kauffman bracket skein relations. In this paper we study the plucking polynomial and the plucking polynomial with delay function. We present a formula for the plucking polynomial of hedgehog rooted trees and explore the unimodality of this polynomial. In particular, we consider an anti-unimodal delay function and a delay function with a specific image set. Furthermore, we present a number of interesting examples and make some speculations on the unimodality of plucking polynomials with delay functions of hedgehog rooted trees.
\end{abstract}
\maketitle

\tableofcontents

\section{Introduction}
The plucking polynomial is an invariant of rooted trees that enjoys connections with the mathematical theory of knots.  The fourth author introduced the plucking polynomial (originally called the $q$-polynomial of rooted trees) after completing a work with M. D{\c a}bkowski and C. Li concerning the Kauffman bracket skein module of  lattice crossings \cite{DLP, Prz}. The plucking polynomial's connections to lattice crossings is explored in \cite{DP}. A further study of the properties of the plucking polynomial   can be found in \cite{CMPWY1, CMPWY2, CMPWY3}. 

\ 

The plucking polynomial is a polynomial defined recursively from a plane rooted tree by ``plucking'' the leaves of the tree. This article is organized as follows. In this section we first define a rooted tree and introduce the standard version of the plucking polynomial together with a calculation example. Then we define the plucking polynomial with delay function. In Section \ref{pluckinghh} we define a hedgehog rooted tree and show a calculation of its plucking polynomial. There we present a formula for the calculation of the plucking polynomial with delay function and we study its unimodality under certain conditions. Section \ref{calculationsExamples} consists of some examples and a generalization of the structure of the polynomial for a specific hedgehog rooted tree. Finally, in Section \ref{FutureWork} we present some questions and speculations that may guide future directions of related investigation.

\begin{definition}\label{rootedtree}
A \textbf{rooted tree} is a graph that is connected, does not contain any cycles (acyclic), and one vertex is designated as the root. A plane rooted tree is a rooted tree embedded in the plane.  Figure \ref{PLUCKING:examplerootedtree} shows examples of plane rooted trees.
\end{definition}	

\begin{figure}[ht]
\centering
\begin{subfigure}{.45\textwidth}
\centering
$\vcenter{\hbox{
\begin{overpic}[scale=.8]{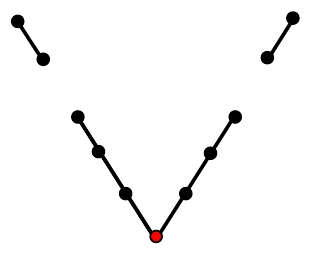}
\put(24, 56){\fontsize{9}{9}$\bullet$}
\put(21, 61){\fontsize{9}{9}$\bullet$}
\put(18, 66){\fontsize{9}{9}$\bullet$}
\put(97, 66){\fontsize{9}{9}$\bullet$}
\put(91, 56){\fontsize{9}{9}$\bullet$}
\put(94, 61){\fontsize{9}{9}$\bullet$}
\put(80, 20){$a$ edges}
\put(5, 20){$b$ edges}
\put(65,1){root}
\put(-2,80){$v_2$}
\put(114,81){$v_1$}
\end{overpic} }} $
\caption{$T_{b, a}$} \label{PLUCKING:exampleTab}
\end{subfigure}
\begin{subfigure}{.45\textwidth}
\centering
$\vcenter{\hbox{
\begin{overpic}[scale=.8]{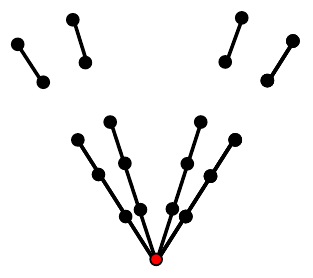}
\put(24, 56){\fontsize{9}{9}$\bullet$}
\put(21, 61){\fontsize{9}{9}$\bullet$}
\put(18, 66){\fontsize{9}{9}$\bullet$}
\put(97, 66){\fontsize{9}{9}$\bullet$}
\put(91, 56){\fontsize{9}{9}$\bullet$}
\put(94, 61){\fontsize{9}{9}$\bullet$}
\put(65,1){root}
\put(-2,80){$v_k$}
\put(34,93){$v_{k-1}$}
\put(94,90){$v_2$}
\put(114,80){$v_1$}
\put(37.5, 62){\fontsize{9}{9}$\bullet$}
\put(35.5, 67){\fontsize{9}{9}$\bullet$}
\put(33.5, 72){\fontsize{9}{9}$\bullet$}
\put(82, 72){\fontsize{9}{9}$\bullet$}
\put(78, 62){\fontsize{9}{9}$\bullet$}
\put(80, 67){\fontsize{9}{9}$\bullet$}
\end{overpic} }} $
\caption{$T_{a_k, a_{k-1}, \dots, a_1}$} \label{PLUCKING:exampleT1n}
\end{subfigure}
\caption{Example of plane rooted trees: (a) $T_{b,a}$ is a rooted tree with 2 branches of length $b$ and $a$, and each branch has one leaf, $v_2$ and $v_1$, respectively. (b) $T_{a_k, a_{k-1}, \dots, a_1}$ is a rooted tree with $k$ branches of length $a_i$ for $1\leq i \leq k$, and each branch has one leaf.} \label{PLUCKING:examplerootedtree}
\end{figure}

\begin{definition}\label{pluckingpolynomial}
Let $T$ be a plane rooted tree and let $|E(T)|$ denote the number of edges of $T$. The \textbf{plucking polynomial} $Q(T, q)$ (or succinctly $Q(T)$)  is the unique polynomial obtained from $T$ by the initial condition and recursive formula shown below:

\ 

\begin{enumerate}
\item If $T$ is the one vertex tree, i.e. $|E(T)|=0$, then $Q(T, q) =1$.

\

\item If $|E(T)|>0$, then 
$$ Q(T,q) = \sum_{v \in L(T)} q^{r(T,v)} Q(T-v, q),$$
where the sum is taken over the set $L(T)$ of all leaves of $T$, that is, vertices of degree different from the root; $r(T,v)$ is the number of edges of $T$ to the right of the unique path connecting  $v$ to the root; $T-v$ is a rooted tree obtained by removing the leaf $v$ from $T$ along with the edge directly connected to $v$. 
\end{enumerate}
\end{definition}

\begin{example}
Consider the following plane rooted tree $T$.
$$ T = \vcenter{\hbox{
\begin{overpic}[scale = .8]{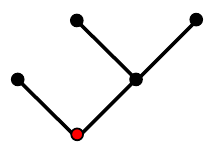}
\put(-2,18){$v_3$}
\put(20,41){$v_2$}
\put(76,41){$v_1$}
\end{overpic} }} $$

The unique path connecting each leaf $v_i$ to the root for $i=1,2,3$ is shown in Figure \ref{PLUCKING:exampleuniquepath}. We obtain $r(T, v_1)= 0$, $r(T, v_2)= 1$, and $r(T, v_3)= 3$.

\begin{figure}[H]
\centering
$ \vcenter{\hbox{
\begin{overpic}[scale = .8]{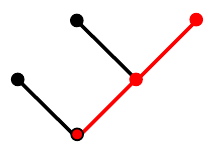}
\put(-2,18){$v_3$}
\put(20,41){$v_2$}
\put(76,41){$v_1$}
\end{overpic} }} \ \ \ \vcenter{\hbox{
\begin{overpic}[scale = .8]{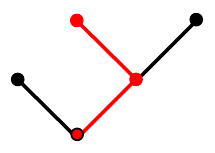}
\put(-2,18){$v_3$}
\put(20,41){$v_2$}
\put(76,41){$v_1$}
\end{overpic} }} \ \ \ \vcenter{\hbox{
\begin{overpic}[scale = .8]{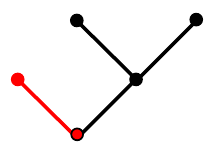}
\put(-2,18){$v_3$}
\put(20,41){$v_2$}
\put(76,41){$v_1$}
\end{overpic} }} $
\caption{The red segments highlight the unique path from $v_i$ to the root for $i = 1, 2, 3$.} \label{PLUCKING:exampleuniquepath}
\end{figure}

Then $Q(T,q) = q^0 Q(T-v_1)+q^1 Q(T-v_2)+q^3 Q(T-v_3)$, where 

$$ T-v_1=\vcenter{\hbox{
\begin{overpic}[scale = .8]{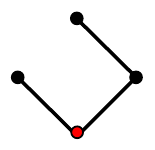}
\put(-2,18){$v_3$}
\put(20,41){$v_2$}
\end{overpic} }}, \ 
T-v_2= \vcenter{\hbox{
\begin{overpic}[scale = .8]{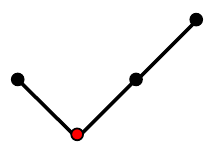}
\put(-2,18){$v_3$}
\put(76,41){$v_1$}
\end{overpic} }}, \ 
T-v_3=\vcenter{\hbox{
\begin{overpic}[scale = .8]{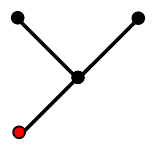}
\put(-2,41){$v_2$}
\put(54,41){$v_1$}
\end{overpic} }}.$$

Observe that $Q(T-v_1) = Q(T-v_2)$, so we only need to compute $Q(T-v_1)$ and $Q(T-v_3)$.

\begin{eqnarray*}
Q(T-v_1) & = & 
q^0 Q \left(\vcenter{\hbox{
\begin{overpic}[scale = .5]{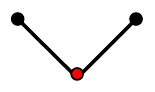}
\put(-1,8){\fontsize{8}{8}$v_2'$}
\put(29,8){\fontsize{8}{8}$v_1'$}
\end{overpic} }} \right)+ 
q^2 Q \left( \vcenter{\hbox{
\begin{overpic}[scale = .5]{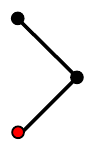}
\put(10,27){\fontsize{8}{8}$v_1'$}
\end{overpic} }} \right) \\
&=& \left(
 q^0  Q\left( \vcenter{\hbox{\includegraphics[scale = .5]{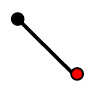}}} \right) + 
q^1 Q \left( \vcenter{\hbox{\includegraphics[scale = .5]{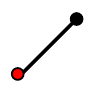}}} \right) \right) + 
q^2 \left(
q^0 Q \left(\vcenter{\hbox{\includegraphics[scale = .5]{RTex6.pdf}}} \right) \right)\\
&=& Q(\vcenter{\hbox{\includegraphics[scale = .5]{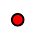}}}) + q Q( \vcenter{\hbox{\includegraphics[scale = .5]{RTex8.pdf}}}) + q^2 Q(
 \vcenter{\hbox{\includegraphics[scale = .5]{RTex8.pdf}}}) = 1+q+q^2,
\end{eqnarray*}
and 
\begin{eqnarray*}
Q(T-v_3) & = & q^0 Q \left(\vcenter{\hbox{
\begin{overpic}[scale = .5]{RTex5}
\put(10,27){\fontsize{8}{8}$v_1'$}
\end{overpic} }} \right)+ 
q^1 Q \left( \vcenter{\hbox{
\begin{overpic}[scale = .5]{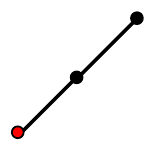}
\put(20,30){\fontsize{8}{8}$v_1'$}
\end{overpic} }} \right)
= (1+q) Q \left(\vcenter{\hbox{
\begin{overpic}[scale = .5]{RTex5}
\put(10,27){\fontsize{8}{8}$v_1'$}
\end{overpic} }} \right) \\
&=& (1+q) q^0 Q \left( \vcenter{\hbox{\includegraphics[scale = .5]{RTex6.pdf}}} \right) \\
&=& (1+q) q^0 Q(\vcenter{\hbox{\includegraphics[scale = .5]{RTex8.pdf}}}) = 1+q.
\end{eqnarray*}
Therefore, $Q(T,q) = (1+q)(1+q+q^2)+ q^3 (1+q) = (1+q)(1+q+q^2+q^3).$

\end{example}

\begin{theorem}\label{independenceofembedding} \cite{Prz}
The plucking polynomial $Q(T)$ of a rooted tree $T$ does not depend on the plane embedding. $Q(T)$ is therefore an invariant of rooted trees.    
\end{theorem}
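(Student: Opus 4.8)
The plan is to deduce Theorem~\ref{independenceofembedding} from a closed product formula for $Q(T)$ whose right-hand side involves only embedding-free data. Suppose $n:=|E(T)|\geq 1$, let $c_1,\dots,c_k$ be the children of the root, let $T_i$ be the subtree of $T$ rooted at $c_i$ (not containing the edge joining $c_i$ to the root), and put $n_i:=|E(T_i)|+1$, so that $n=n_1+\cdots+n_k$. I would prove by induction on $n$ that
\begin{equation*}
Q(T)=\binom{n}{n_1,\dots,n_k}_q\,\prod_{i=1}^{k}Q(T_i),\qquad\text{where}\qquad\binom{n}{n_1,\dots,n_k}_q:=\frac{[n]_q!}{[n_1]_q!\cdots[n_k]_q!},
\end{equation*}
$[m]_q:=1+q+\cdots+q^{m-1}$ and $[m]_q!:=[1]_q[2]_q\cdots[m]_q$; the base case is the one-vertex tree, for which $Q(T)=1$.

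Granting the product formula, Theorem~\ref{independenceofembedding} follows by a second induction on $n$: for $n\geq 1$, each $Q(T_i)$ is independent of the plane embedding by the inductive hypothesis, hence a function of the abstract rooted tree $T_i$ alone; the Gaussian multinomial coefficient is symmetric in $n_1,\dots,n_k$; and the multiset of pairs $\{(T_i,n_i)\}_{i=1}^k$ is an invariant of the abstract rooted tree $T$. Therefore the right-hand side, and hence $Q(T)$, depends only on $T$.

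The core of the argument is the inductive step for the product formula. Here I would apply the recursive clause of Definition~\ref{pluckingpolynomial} and partition $L(T)$ according to the branch at the root containing each leaf: writing $L_i$ for the set of leaves of $T$ lying inside $T_i$, one has $L(T)=\bigsqcup_{i=1}^k L_i$, where $L_i$ coincides with the leaf set of $T_i$ except in the degenerate case $n_i=1$, in which $L_i=\{c_i\}$. For $v\in L_i$ a direct reading of the definition of $r$ gives $r(T,v)=r(T_i,v)+R_i$, where $R_i:=n_{i+1}+\cdots+n_k$ is the number of edges lying strictly to the right of the $i$-th branch and does not depend on $v$; and $T-v$ is just $T$ with $T_i$ replaced by $T_i-v$. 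Substituting into the recursion, applying the inductive hypothesis to each $Q(T-v)$ (whose root branches are the $T_l$, $l\neq i$, together with $T_i-v$), pulling the $v$-independent multinomial factor out of each group, and using the recursion once more on each $T_i$ to recognize $\sum_{v\in L_i}q^{r(T_i,v)}Q(T_i-v)=Q(T_i)$, one is left with
\begin{equation*}
Q(T)=\Bigl(\prod_{l=1}^{k}Q(T_l)\Bigr)\sum_{i=1}^{k}q^{R_i}\binom{n-1}{n_1,\dots,n_i-1,\dots,n_k}_q,
\end{equation*}
so that it only remains to invoke the $q$-Pascal recursion for Gaussian multinomials in its ``expand by the last block'' form, $\binom{n}{n_1,\dots,n_k}_q=\sum_{i=1}^k q^{n_{i+1}+\cdots+n_k}\binom{n-1}{n_1,\dots,n_i-1,\dots,n_k}_q$.

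I expect the main obstacle to be organizational rather than conceptual: reading off the exponent $R_i$ correctly from the clause ``edges to the right of the path connecting $v$ to the root'', absorbing the degenerate branches $n_i=1$ cleanly (there $T_i-v$ is the empty tree and the $i$-th block of the multinomial simply drops out), and matching the reduced coefficient $\binom{n-1}{n_1,\dots,n_i-1,\dots,n_k}_q$ with the correct --- rightmost, not leftmost --- version of the Gaussian multinomial recursion. A cosmetically different path to the same conclusion is to verify the $q$-analogue of the hook length formula, $Q(T)=[\,n\,]_q!\big/\prod_{v\neq\mathrm{root}}[e(v)]_q$, where $e(v)$ counts the edges of the subtree rooted at $v$ together with the edge directly above $v$; this expression is manifestly independent of the embedding, and checking that it satisfies the plucking recursion reduces, after the same kind of bookkeeping and one further induction on $T$, to the elementary identity $[n]_q=\sum_{i=1}^k q^{R_i}[n_i]_q$.
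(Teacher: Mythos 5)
Your argument is correct and is essentially the proof the paper relies on: the paper simply defers to Theorem 2.2 and Corollary 2.3 of \cite{Prz}, where the same wedge/product formula $Q(T)=\binom{n}{n_1,\dots,n_k}_q\prod_{i=1}^{k}Q(T_i)$ is established by exactly this induction on the root branches and then iterated into the manifestly embedding-independent closed form (your hook-length-type expression). The degenerate single-edge branches and the rightmost $q$-Pascal recursion are handled correctly, so there is nothing to fix.
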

\begin{proof}
    See Theorem 2.2 and Corollary 2.3 in \cite{Prz}.    
\end{proof}

Notice that in the previous example we calculated the plucking polynomial of a special rooted tree which was shown in Figure \ref{PLUCKING:exampleTab}. Definition \ref{PLUCKING:definitionqinteger} introduces the notions of quantum integers, q-factorials, and Gaussian polynomials. Thus we can write $Q(T- v_1) = Q(T_{1,2}) = [3]_q$ and $Q(T,q) = [2]_q[4]_q$.

\begin{definition} \label{PLUCKING:definitionqinteger}
$[n]_q = 1 +q + \cdots + q^{n-1}$ is called a \textbf{quantum integer}, $[n]_{q} ! = \prod\limits_{k=1}^{n} [k]_{q}$ is called a $\boldsymbol{q}$\textbf{-factorial},
and 
$$\binom{n}{i, n-i}_q = \frac{[n]_q!}{[i]_q![n-i]_q!}$$
is called a \textbf{Gaussian polynomial}. By convention $[0]_q!=1$, $\binom{n}{0}_q =1 = \binom{n}{n}_q$, and $\binom{n}{-1} = 0$. 
\end{definition}

\begin{example}
Consider the rooted tree with two long branches of length $b$ and $a$ respectively, shown in Figure \ref{PLUCKING:exampleTab}. One natural question that arises is how many different ways there are to “pluck” the tree $T_{b,a}$ one leaf at a time. In the case where the plane is commutative, the left branch should be plucked $b$ times and the right branch $a$ times, so the answer in this case is given by ${a+b \choose a}$. On the other hand, when the plane is $q$-commutative, i.e. $yx=qxy$, 
For a detailed discussion see \cite{Prz, PBIMW}.
\end{example}

\begin{definition}\label{delayfunction}
The plucking polynomial $Q(T,f)$ of a plane rooted tree $T$ with a \textbf{delay function} $f: L(T) \to \mathbb{Z}^+$ is given by the initial conditions $Q(\vcenter{\hbox{\includegraphics[scale = .5]{RTex8.pdf}}}) =1$, $Q(T,f)=0$ if $f(w)\geq 2$ for any leaf $w$, and the recursive relation:
$$Q(T,f)= \sum_{v \ \in \ L_1(T)} q^{r(T,v)}Q(T-v,f_v),$$
where $L_1(T)=f^{-1}(1)\in L(T)$, and $f_v: L(T-v) \to \mathbb{Z}^+$ is given by

$$ f_v(u)=\left\{
                \begin{array}{ll}
                max\{1,f(u)-1\} & \mbox{ when $u\in L(T)$},\\
                 \ \ \ \ \ \ \ \ \ \ \ \ \ 1 &  \mbox{ when $u\notin L(T), u\in L(T-v)$}.
                \end{array}
                \right.$$
                
\end{definition}

\begin{example}
    Consider the following plane rooted tree $T$ with delay function:
$$ (T, f) = \vcenter{\hbox{
\begin{overpic}[scale = .8]{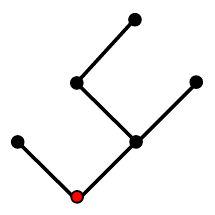}
\put(3,33){$2$}
\put(50,80){$3$}
\put(74,56){$1$}
\end{overpic} }}. $$
Then,
\begin{eqnarray*}
   Q(T, f) =  Q \left( \vcenter{\hbox{
\begin{overpic}[scale = .8]{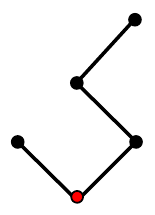}
\put(3,33){$1$}
\put(50,80){$2$}
\end{overpic} }} \right) = q^{3} Q \left(\vcenter{\hbox{
\begin{overpic}[scale = .8]{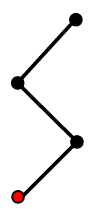}
\put(30,80){$1$}
\end{overpic} }}  \right) = q^3.
\end{eqnarray*}
\end{example}

\section{Plucking Polynomial of the Hedgehog Rooted Tree}\label{pluckinghh}	
A hedgehog graph is a rooted tree in which every edge is a leaf. A hedgehog graph can be written as $T_{1, 1, ..., 1}$, a root with $n$ protruding edges; see Figure \ref{fig:hedgehog}.


	\begin{figure}[h]
	    \centering
	    $\vcenter{\hbox{
\begin{overpic}[scale = .8]{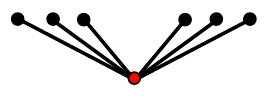}
\put(49,23){\fontsize{7}{7}$\bullet$}
\put(40,23){\fontsize{7}{7}$\bullet$}
\put(58,23){\fontsize{7}{7}$\bullet$}
\end{overpic} }}$
	    \caption{A hedgehog rooted tree.}
	    \label{fig:hedgehog}
	\end{figure}

\begin{example}
    Consider the hedgehog rooted tree $T_{1, 1, \cdots, 1}$ with $n$ leaves as shown in Figure \ref{fig:hedgehog}. Since $T_{1, 1, \cdots, 1}-v_1 =T_{1, 1, \cdots, 1}-v_2$ for all $v_1, v_2 \in L(T_{1, 1, \cdots, 1})$, then $Q(T_{1, 1, \cdots, 1}) = [n]_q Q(T_{1, 1, \cdots, 1}-v)$. By induction we have
    $$Q(T_{1, 1, \cdots, 1}) = [n]_q!.$$
\end{example}

\begin{example}\label{exampledelaydefi}
We use Definition \ref{delayfunction} to calculate the plucking polynomial of the hedgehog rooted tree shown in Figure \ref{fig:32123example}.

\begin{figure}[H]
	\centering	$$\vcenter{\hbox{\begin{overpic}[scale = .8]{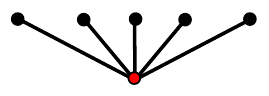}
\put(3,35){$3$}
\put(29,35){$2$}
\put(49,35){$1$}
\put(69,35){$2$}
\put(94,35){$3$}
\end{overpic}}}$$	\caption{Hedgehog rooted tree with delay values $2$ and $3$.}
\label{fig:32123example}
\end{figure}    

 \begin{align*}
Q \left(  \vcenter{\hbox{\begin{overpic}[scale = .8]{Tree5hedge}
\put(3,35){$3$}
\put(29,35){$2$}
\put(49,35){$1$}
\put(69,35){$2$}
\put(94,35){$3$}
\end{overpic}}} \right)  & =
q^{2} \cdot Q\left(  \vcenter{\hbox{\begin{overpic}[scale = .8]{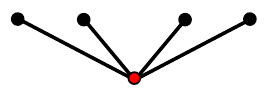}
\put(3,35){$2$}
\put(29,35){$1$}
\put(69,35){$1$}
\put(94,35){$2$}
\end{overpic}}} \right) & \\\\ 
\hspace{3cm} &= q^{2} \left[  q\cdot  Q \left(  \vcenter{\hbox{\begin{overpic}[scale = .8]{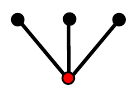}
\put(3,35){$1$}
\put(24,35){$1$}
\put(44,35){$1$}
\end{overpic}}}\right)+q^{2} \cdot Q \left(  \vcenter{\hbox{\begin{overpic}[scale = .8]{Tree3hedge}
\put(3,35){$1$}
\put(24,35){$1$}
\put(44,35){$1$}
\end{overpic}}}\right) \right] &  \\\\
   \hspace{3cm} & = (q^{3} +q^4)\cdot  Q \left(  \vcenter{\hbox{\begin{overpic}[scale = .8]{Tree3hedge}
\put(3,35){$1$}
\put(24,35){$1$}
\put(44,35){$1$}
\end{overpic}}}\right) \\\
    \hspace{3cm} & = \left[ 3 \right]_{q}! \cdot (q^{3}+q^{4}) \ \ \  = \ \ \  \left[ 3 \right]_{q}\cdot  \left[ 2 \right]_{q} \cdot  \left[ 1 \right]_{q}  \cdot (q^{3}+q^{4}) & \\\
     \hspace{3cm} & = (1+q+q^{2})\cdot (1+q) \cdot (q^{3}+q^{4}) \ \ \ = \ \ \ q^{7}+3q^{6}+4q^{5}+3q^{4}+q^{3} &\\\
     \hspace{3cm} & = q^{3}  \left[ 3 \right]_{q}  \left[ 2 \right]^{2}_{q}.
     \end{align*}
\end{example}
	
In the following two subsections, we analyze special cases of the plucking polynomial of a hedgehog rooted tree with delay function. First we prove that the plucking polynomial is unimodal when the delay function $f$ is anti-unimodal in Subsection \ref{unimodalitysection}. Then we consider the specific case of a delay function whose image is the set $\{1, 2\}$ in Subsection \ref{specialdelaysection}.
 
\subsection{Unimodality}\label{unimodalitysection} We say that that the sequence $a_1, a_2, \cdots a_n$ is unimodal if there exists $ 1\leq j \leq n$ such that  
$$a_1 \leq a_2 \leq ... \leq a_j \geq ... \geq a_n.$$

The polynomial $P$ is called unimodal if its coefficients form a unimodal sequence.
Consider a plane rooted tree $T$ with delay function $f: L(T) \to \mathbb{Z}^+$. We say that this function is anti-unimodal if $-f$ is unimodal. \\

In this subsection we prove that in the case of $f$ anti-unimodal, $Q(T_{1,...,1},f)$ is the product of quantum integers therefore $Q(T_{1,...,1},f)$ is unimodal. 

	
\

Let $T$ be the plane rooted tree $T_{1, 1, \cdots, 1}$, that is, a rooted tree embedded in the plane with a root and $n$ other vertices which are all leaves. Denote the leaves (from left to right) by $v_1,...,v_n$. Let the delay function $f$ be defined by  $f(v_i)=a_i$. We use the notation $|f^{-1}(i)|=f_i$, and $M=max(a_1,a_n)$. Assume that the function $f$ is anti-unimodal, that is, for some $j$ we have that
$$a_1 \geq a_2 \geq ... \geq a_j \leq ... \leq a_n.$$
and that $f(v_j)=a_j=1$; otherwise $Q(T,f)$ would be  equal to zero. The last piece of notation we need is the following. For $i>1$, let $f_i=f_i^- + f_i^+$, where $f_i^+$ is the number of vertices in $f^{-1}(i)$ which are to the right of $v_j$ in the tree. The precise formulation is given in Theorem \ref{antiunimodalformula}.

\begin{theorem}\label{antiunimodalformula} Let $T = T_{1, 1, \cdots, 1}$ be a hedgehog rooted tree. Let $f$ be an anti-unimodal delay function as described above. Then
$$Q(T,f)= q^{\sum_{i=2}^{a_n}(i-1)f_i^+ }\prod_{j=1}^n\bigg[(\sum_{i=1}^jf_i)-j+1\bigg]_q.$$ 
In particular, $Q(T,f)$ is unimodal as it is the product of quantum integers.
\end{theorem}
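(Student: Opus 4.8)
The plan is to prove the formula by induction on $n$, the number of leaves of $T$, simultaneously tracking how the anti-unimodal shape of the delay function is preserved under the plucking operation. The key structural observation is that since $f$ is anti-unimodal with $a_j = 1$, the set $L_1(T) = f^{-1}(1)$ of pluckable leaves is exactly the maximal interval of leaves around position $j$ whose delay value equals $1$; call these $v_p, v_{p+1}, \dots, v_s$ (a contiguous block, by anti-unimodality). When we pluck any such leaf $v_k$, every remaining leaf has its delay value decreased by $1$ (capped at $1$), so $f_v$ is again anti-unimodal on $T - v_k = T_{1,\dots,1}$ with $n-1$ leaves, and the new multiplicity data is $f_1' = |f^{-1}(1)| + |f^{-1}(2)| - 1$ and $f_i' = f_{i+1}$ for $i \geq 2$. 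I would record this bookkeeping carefully: the partial sums $(\sum_{i=1}^\ell f_i) - \ell + 1$ shift in a way that matches the telescoping in the product, and the "$+$" superscript counts (vertices to the right of the plucking region) also shift predictably.

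First I would handle the base case $n = 1$: the tree is a single edge, $f(v_1) = 1$ forced, and both sides equal $1$. For the inductive step, I would apply Definition \ref{delayfunction} to write
$$Q(T,f) = \sum_{k=p}^{s} q^{r(T, v_k)} Q(T - v_k, f_{v_k}),$$
and then compute $r(T, v_k)$ explicitly: for the hedgehog tree the path from $v_k$ to the root has no edges to its right except the edges $v_{k+1}, \dots, v_n$, so $r(T, v_k) = n - k$. Applying the inductive hypothesis to each $Q(T - v_k, f_{v_k})$ gives a common product factor $\prod_{\ell=1}^{n-1}[\,(\sum_{i=1}^\ell f_i') - \ell + 1\,]_q$ times a power of $q$ accounting for both the $q^{n-k}$ from the current step and the exponent $q^{\sum_{i \geq 2}(i-1)(f_i')^+}$ coming from the recursion; the latter exponent is independent of $k$ precisely because the $v_k$ being removed all lie in the delay-$1$ block, hence contribute $0$ to that sum whether they are counted on the left or right. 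What remains is to show
$$\sum_{k=p}^{s} q^{n-k} = q^{(\text{something})}\big[\,\textstyle(\sum_{i=1}^n f_i) - n + 1\,\big]_q \Big/ \big[\,\cdots\,\big]_q,$$
i.e. that the geometric-type sum over the pluckable block, combined with the telescoping mismatch between the length-$n$ product and the length-$(n-1)$ product, reassembles into exactly the extra quantum-integer factor $[f_1 - 1 + (\text{lower partial sums})]_q$ and the correct overall power of $q$.

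The main obstacle I anticipate is the exponent bookkeeping: verifying that the prefactor $q^{\sum_{i=2}^{a_n}(i-1)f_i^+}$ transforms correctly under the recursion. When passing from $f$ to $f_{v_k}$, the index $i$ in the sum shifts by one (delay value $i+1$ becomes $i$, so $(i-1)$ becomes $i$, but the multiplicity that was at level $i+1$ is now at level $i$), and one must confirm that the net change in $\sum (i-1) f_i^+$ exactly matches $r(T,v_k) = n-k$ summed appropriately against the $[\,\cdot\,]_q$ factor — this is where the geometric sum $1 + q + \cdots + q^{s-p}$ has to materialize as the quantum integer $[\,s - p + 1\,]_q = [\,f_1\,]_q$ (when $j$'s block is the whole delay-$1$ set) or more generally as the correct partial-sum factor. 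I would organize this by first proving the identity $\sum_{k=p}^{s} q^{n-k} = q^{n-s}[s-p+1]_q$ and then checking, via the partial-sum recursions $S'_\ell := (\sum_{i=1}^\ell f'_i) - \ell + 1 = S_{\ell+1} - f_1 + 1$ for the shifted data, that the ratio of the length-$n$ product to the length-$(n-1)$ product is $[S_1]_q = [f_1]_q$ — so the whole step reduces to matching two powers of $q$, which is a finite computation I would not grind through here but which follows from the explicit formula $r(T,v_k) = n-k$ and the definition of $f_i^\pm$.
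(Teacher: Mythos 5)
Your proposal follows essentially the same route as the paper's proof: induction on the number of leaves, using that $(T-v,f_{v})$ is the same anti-unimodal hedgehog for every $v\in L_1(T)$, so that $\sum_{v\in L_1(T)}q^{r(T,v)}$ factors as $q^{r(T,v')}\,[f_1]_q$ over the contiguous delay-one block; if anything, you spell out the exponent and partial-sum bookkeeping in more detail than the paper does. One small slip: with $f_1'=f_1+f_2-1$ and $f_i'=f_{i+1}$ for $i\geq 2$, the partial sums satisfy $S'_\ell=S_{\ell+1}$ (not $S_{\ell+1}-f_1+1$), which is exactly what makes the length-$n$ and length-$(n-1)$ products telescope to the extra factor $[S_1]_q=[f_1]_q$ that you correctly identify.
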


\begin{proof}
The proof proceeds by induction on $n$ - the number of leaves of $T$. Recall the recursive formula from the definition of the plucking polynomial:
$$ Q(T,f) = \sum_{v \in L(T)} q^{r(T,v)} Q(T-v, f_v).$$
In this case, the rooted tree with delay function $(T-v, f_{v})$ is independent of the vertex $v$. Therefore, it would be again a hedgehog with anti-unimodal function. Thus,
$$Q(T,f) = \left(  \sum_{v \in L_{1}(T)} q^{r(T,v)} \right) \cdot Q(T-v, f_{v}),$$
for arbitrarily chosen $v$ in $L_{1}(T)$. Let us analyze how we find $q^{r(T,v)}$ for any $v \in L_{1} $. Consider the vertex $v'$ in $L_{1}$ and suppose it is the most to the right among all vertices. Then $r(T,v')$ is the number of vertices of $L$ to the right of $v'$. In our notation:
$$r(T,v')=|f^{-1}(2)|+|f^{-1}(3)|+ \cdots +|f^{-1}(a_{n})|.$$
Then,
\begin{eqnarray*}
  \displaystyle\sum_{v \in L_{1}(T)} q^{r(T,v)} &=& q^{r(T,v)} + qq^{r(T,v')} + q^{2}q^{r(T,v')}+ \cdots + q^{|f^{-1}(1)-1|}q^{r(T,v')} \\ &=& (1+q+q^{2}+q^{3}+ \cdots + q^{|f^{-1}(1)|-1})\cdot q^{r(T,v')} \\ &=& \left[ f^{-1}(1)  \right]_{q} \cdot q^{r(T,v')}.
\end{eqnarray*}

Therefore,
$$Q(T,f)=\left( \displaystyle\sum_{v \in L_{1}(T)} q^{r(T,v)}\right) \cdot Q(T-v,f_{v})=\left[ f^{-1}(1)  \right]_{q} \cdot q^{r(T,v)} \cdot Q(T-v',f_{v'}).$$
Observe

 that for $f\equiv 1$ we get $Q(T)=[n]_q!$ as needed. Moreover, notice that if $j > \sum\limits_{i=1}^j f_i$ then the sum is equal to zero.
\end{proof}

\begin{example}
Consider again the hedgehog rooted tree from Example \ref{exampledelaydefi} illustrated in Figure \ref{fig:32123example}. Let us calculate its plucking polynomial using the formula from Theorem \ref{antiunimodalformula}.

\

From the figure we can determine all the variables for the formula. First we have that $max(a_1,a_n)=max(3,3)=3$. Then we see that $f(v_1)=a_1=3$, $f(v_2)=a_2=2$, $f(v_3)=a_3=1=a_j$, $f(v_4)=a_4=2$, and $f(v_5)=a_5=3$. Recall that $f_i=f^{-}_i+f^{+}_i$ for $i > 0$, where $f_i=|f^{-1}(i)|$. This means that we need to consider the cases for $i=2, 3, 4, 5$.
\begin{itemize}
    \item For $i=2$, $f_2=f^{-}_2+f^{+}_2=|f^{-1}(2)|=2$.
    \item For $i=3$, $f_3=f^{-}_3+f^{+}_3=1+1=2$.
    \item For $i=4$, we have $f_4=0$.
    \item For $i=5$, we have $f_5=0$.
\end{itemize}
Moreover, observe that
$$\displaystyle \sum _{i=2}^{a_n}(i-1)f_{i}^{+}= \displaystyle \sum_{i=2}^{3}(i-1)f_{i}^{+}= (2-1)f_{2}^{+}+(3-1)f_{3}^{+}=(1)(1)+(2)(1)=3, $$
which means that from here we get the factor $q^{3}$. Now observe that

\begin{eqnarray*}
    \displaystyle \prod_{j=1}^{5} \left[ \left( \displaystyle \sum_{i=1}^{j} f_i  \right) -j+1 \right]_{q} &=&  \left[  \left( \displaystyle\sum_{i=1}^{1}f_{i}  \right)-1+1   \right]_{q} 
\left[ \left( \displaystyle\sum_{i=1}^{2}f_{i}  \right) -1   \right]_{q} 
\left[   \left( \displaystyle\sum_{i=1}^{3}f_{i}  \right)-2  \right]_{q} \\&&
\left[  \left( \displaystyle\sum_{i=1}^{4}f_{i}  \right)-3    \right]_{q}
\left[ \left( \displaystyle\sum_{i=1}^{5}f_{i}  \right)-4    \right]_{q} \\&=& \left[  f_1  \right]_{q} \left[ f_1+f_2-1    \right]_{q}  \left[  f_1+f_2+f_3-2  \right]_{q}  \left[ f_1+f_2+f_3+f_4-3   \right]_{q} \\&& \left[f_1+f_2+f_3+f_4+f_5-4    \right]_{q}\\&=& \left[ 1  \right]_{q} \left[  2 \right]_{q}    \left[  3 \right]_{q} \left[  2 \right]_{q} \left[  1 \right]_{q} = (\left[ 2    \right]_{q})^{2} \left[ 3 \right]_{q}.
\end{eqnarray*}

Thus $Q(T, f)=q^{3} (\left[ 2    \right]_{q})^{2} \left[ 3 \right]_{q}$, which agrees with the result obtained in Example \ref{exampledelaydefi}.
\end{example}

\subsection{Special Delay Functions of the Hedgehog Rooted Tree}\label{specialdelaysection} Now we consider a delay function whose image is equal to the set $\{1, 2\}$. The precise formulation for computing the plucking polynomial in this case is given in Proposition \ref{pluckingdelay12}.	
	
\begin{proposition} \label{pluckingdelay12}
	Let $T = T_{1, 1, \cdots, 1}$ be the Hedgehog (star) rooted tree with n rays and let $f$ be a delay function with values $1$ or $2$. By ordering the leaves from right to left we let $\varepsilon_i = 1$ if $f$(i'th leaf) $=1$ and $\varepsilon_i = 0$ if $f$(i'th leaf)$=2$. Then $$Q(T,f)=p_n(q)[n-1]_q!,$$
	where $p_n(q)=\sum_{i=0}^{n-1} \varepsilon_i q^i$.
\end{proposition}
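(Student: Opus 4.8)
The plan is to induct on $n$, the number of rays, using the recursive definition of the plucking polynomial with delay function exactly as in the proof of Theorem \ref{antiunimodalformula}. For the base case $n=1$ the tree has a single leaf, $p_1(q)=\varepsilon_0$, and $[0]_q!=1$, so the claim reduces to checking that $Q(T,f)$ equals $1$ when $f$ of the sole leaf is $1$ and $0$ when it is $2$, which is precisely the initial condition in Definition \ref{delayfunction}. For the inductive step, I would separate the analysis according to whether the rightmost leaf (index $i=0$ in the right-to-left ordering) has delay $1$ or $2$; equivalently whether $\varepsilon_0=1$ or $\varepsilon_0=0$.

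First I would handle the structural simplification that makes the hedgehog tractable: since $T-v$ is the same rooted tree $T_{1,\dots,1}$ with $n-1$ rays regardless of which leaf $v$ we remove, the recursion collapses to $Q(T,f)=\big(\sum_{v\in L_1(T)}q^{r(T,v)}\big)\cdot Q(T-v,f_v)$ for any fixed $v\in L_1(T)$. The key point is to identify $(T-v,f_v)$: removing a leaf with delay $1$ and applying $f_v(u)=\max\{1,f(u)-1\}$ sends every delay-$2$ leaf to a delay-$1$ leaf, so the new delay function again takes values in $\{1,2\}$ — in fact all surviving leaves now have delay $1$ — and the induction hypothesis applies to $(T-v,f_v)$. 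I would choose $v$ to be the \emph{rightmost} leaf with delay $1$; then $r(T,v)$ counts the edges strictly to its right, all of which are delay-$2$ leaves (since $v$ is the rightmost delay-$1$ leaf), so $r(T,v)=\#\{i<i_0:\varepsilon_i=0\}$ where $i_0$ is the index of $v$. Summing $q^{r(T,w)}$ over all delay-$1$ leaves $w$, and using that each delay-$1$ leaf to the left of $v$ contributes an extra factor of $q$ per delay-$1$ leaf it passes, the geometric-series computation from Theorem \ref{antiunimodalformula} gives $\sum_{w\in L_1(T)}q^{r(T,w)} = q^{c}[m]_q$ where $m=|f^{-1}(1)|$ is the number of delay-$1$ leaves and $c$ is a correction exponent I would need to pin down.

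The delicate step — and the main obstacle — is bookkeeping the exponents so that the prefactor recombines correctly into $p_n(q)=\sum_{i=0}^{n-1}\varepsilon_i q^i$. Concretely, after the inductive step $(T-v,f_v)$ has all delay-$1$ leaves, so $Q(T-v,f_v)=[n-1]_q!$ by the induction hypothesis (its $p_{n-1}$ is $[n-1]_q$, times $[n-2]_q!$, giving $[n-1]_q!$); thus I must show $\sum_{w\in L_1(T)}q^{r(T,w)}=p_n(q)/[n-1]_q!\cdot[n-1]_q! = p_n(q)\cdot\frac{[n-1]_q!}{[n-1]_q!}$... more carefully, I need $Q(T,f)=\big(\sum_w q^{r(T,w)}\big)[n-1]_q!$ to equal $p_n(q)[n-1]_q!$, i.e. I must prove the \emph{identity} $\sum_{w\in L_1(T)}q^{r(T,w)}=p_n(q)$. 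This is now a purely combinatorial claim about the hedgehog: if the delay-$1$ leaves sit at right-to-left positions $i_1<i_2<\dots<i_m$ (so $\varepsilon_{i_k}=1$), then $r(T,w)$ for the leaf at position $i_k$ equals $i_k - (k-1)$, because to its right there are $i_k$ leaves total of which $k-1$ have delay $1$ and the rest delay $2$, and — wait, $r$ counts edges to the right, which is all $i_k$ of them; so actually $r(T,w)=i_k$ for the $k$-th delay-$1$ leaf? I would resolve this discrepancy by recomputing $r$ carefully: in Definition \ref{pluckingpolynomial}, $r(T,v)$ counts edges to the right of the path from $v$ to the root, which for a hedgehog leaf at right-to-left position $i$ is exactly $i$ (the $i$ edges strictly to its right). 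Then $\sum_{w\in L_1}q^{r(T,w)}=\sum_{k}q^{i_k}=\sum_{i:\varepsilon_i=1}q^i=\sum_{i=0}^{n-1}\varepsilon_iq^i=p_n(q)$ — but this ignores that after plucking some leaves the positions shift, which is the subtlety the $[m]_q$ computation in Theorem \ref{antiunimodalformula} was capturing. I would reconcile these by trusting the direct computation: since $(T-v,f_v)$ is independent of $v$, I only ever apply the recursion once at the top level before the all-delay-$1$ case takes over, so in fact $Q(T,f)=\big(\sum_{v\in L_1(T)}q^{r(T,v)}\big)Q(T-v_0,f_{v_0})$ with $Q(T-v_0,f_{v_0})=Q(T_{1,\dots,1})=[n-1]_q!$ directly, and $\sum_{v\in L_1(T)}q^{r(T,v)}=\sum_{i:\varepsilon_i=1}q^i=p_n(q)$ by the position count above — giving the result in one stroke without nested induction. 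So the honest plan is: (1) reduce via $v$-independence of $T-v$ to a single product $\big(\sum_{v\in L_1}q^{r(T,v)}\big)\cdot[n-1]_q!$; (2) compute $r(T,v_i)=i$ for the leaf at right-to-left position $i$ on a hedgehog, directly from the definition; (3) conclude $\sum_{v\in L_1}q^{r(T,v)}=\sum_{i:f(\text{leaf }i)=1}q^i=p_n(q)$. The main obstacle is step (2) — getting the exponent count right, i.e. confirming that $r$ for the $i$-th leaf from the right is $i$ and that the $q$-shifts from successively removing leaves in the recursion are already subsumed by working with the fixed tree $T$ before any plucking — and I would double-check it against Example \ref{exampledelaydefi}, where the delay-$1$ leaf is the middle one (right-to-left position $2$) with two delay-$2$ leaves to its right, $n=5$, $[n-1]_q!=[4]_q!=[4]_q[3]_q[2]_q$; hmm, but the stated answer there is $q^3[3]_q[2]_q^2$, so here the image is $\{1,2,3\}$ not $\{1,2\}$ and Proposition \ref{pluckingdelay12} does not apply — instead I would test against a genuine $\{1,2\}$-example to validate the exponent bookkeeping.
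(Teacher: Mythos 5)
Your final three-step plan is correct and is essentially the paper's own argument: the paper's proof consists precisely of the observation that $(T-v,f_v)$ is the hedgehog with $n-1$ rays and trivial delay function (so $Q(T-v,f_v)=[n-1]_q!$), leaving implicit the facts you spell out, namely that $r(T,v)=i$ for the leaf at right-to-left position $i$ and hence $\sum_{v\in L_1(T)}q^{r(T,v)}=p_n(q)$. The detours you take along the way (the $[m]_q$ geometric-series computation and the worry about position shifts under successive plucking) are red herrings that you correctly discard, since the recursion is applied only once at the top level with $r$ computed in the original tree $T$.
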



\begin{proof} Observe that $(T-v,f_v)$ for any leaf $v$ is the rooted tree $T_{1,1,...,1,1}$ with $n-1$ rays and trivial delay
function (that is $f_v\equiv 1$). Thus $Q(T-v,f_v)=[n-1]_q!$.
\end{proof}

It is still open whether the plucking polynomial of a rooted tree as in the previous proposition is always unimodal, though experimental calculations suggest this is the case. See Conjecture \ref{unimodconjecture}.

\
 
Proposition \ref{pluckingdelay12} suggests that it is important to analyze polynomials of the form $p_n(q)[n-1]_q!$, where $p_n(q)= \varepsilon_0 + \varepsilon_1 q+...+\varepsilon_{n-1}q^{n-1}$ and $\varepsilon_i= 1 \mbox{ or } 0$. 
This analysis is still open, however it is possible to show unimodality for the polynomial $p_{n+1} [n]_q$ where $p_{n+1}(q)=\varepsilon_0 + \varepsilon_1 q+ \varepsilon_2 q^2+...+ \varepsilon_n q^n$ and $\varepsilon_i = 0$ or $1$.

\begin{proposition}\label{Proposition 3.3} The polynomial $p_{n+1}(q) [n]_q$ is unimodal.
\end{proposition}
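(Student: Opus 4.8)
Write $p_{n+1}(q)[n]_q = \sum_{k} c_k q^k$ and aim to show the coefficient sequence $(c_k)$ is unimodal. The key observation is that multiplying by $[n]_q = 1 + q + \cdots + q^{n-1}$ is a "windowed sum": if $p_{n+1}(q) = \sum_{i=0}^n \varepsilon_i q^i$, then $c_k = \sum_{i=k-n+1}^{k} \varepsilon_i$ (with $\varepsilon_i = 0$ outside $[0,n]$), i.e. $c_k$ counts how many of the $n$ consecutive indices $i \in \{k-n+1, \dots, k\}$ have $\varepsilon_i = 1$. So as $k$ increases by $1$, the window of length $n$ slides right by one: we drop index $k-n$ and gain index $k$. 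Hence $c_{k+1} - c_k = \varepsilon_{k+1} - \varepsilon_{k-n+1} \in \{-1,0,1\}$, so consecutive coefficients differ by at most $1$.

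**Main step: no "valley".** Unimodality fails only if there are indices $a < b < c$ with $c_a > c_b$ and $c_b < c_c$. I would argue this cannot happen by analyzing what the sliding window looks like at a strict local minimum. Because the window has a fixed length $n$ and $p_{n+1}$ has degree $\le n$ (exactly $n+1$ possible nonzero positions), once the left end of the window passes index $0$ and before the right end reaches index $n$, the window is "full" — but that regime is empty or degenerate here since the window length equals the number of available slots. Concretely: the support of $c_k$ as a nonzero quantity runs over $k$ from (smallest $i$ with $\varepsilon_i=1$) up to $(n-1) + (\text{largest } i \text{ with } \varepsilon_i = 1)$. On the left portion (while $k \le n-1$, say) $c_k = \sum_{i=0}^k \varepsilon_i$ is a partial sum of nonnegative terms, hence nondecreasing; on the right portion (while $k \ge n-1$) $c_k = \sum_{i=k-n+1}^{n}\varepsilon_i$ is a tail sum, hence nonincreasing. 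Since these two regimes overlap (both cover $k = n-1$, and more generally $k$ with $k-n+1 \le 0$ and $k \le n$ forces $0 \le k \le n$, covering everything), the sequence is nondecreasing then nonincreasing with the switch at $k = n-1$. I would make this precise by splitting at $k=n-1$: for $k \le n-1$, $c_{k+1}-c_k = \varepsilon_{k+1} \ge 0$; for $k \ge n-1$, $c_{k+1}-c_k = -\varepsilon_{k-n+1} \le 0$. This immediately gives unimodality with peak at $k = n-1$.

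**Expected obstacle.** The only subtlety is bookkeeping at the boundary of the two regimes and the convention $\varepsilon_i = 0$ for $i < 0$ or $i > n$; one must check the two one-sided formulas for $c_k$ genuinely agree on the overlap and that the "drop index $k-n+1$, add index $k+1$" accounting is correct (off-by-one errors in the window endpoints are the real danger). Once the window length $n$ is correctly matched against the index range $\{0,\dots,n\}$ of $p_{n+1}$, the result is immediate — in fact it shows more, namely that $c_0 \le c_1 \le \cdots \le c_{n-1} \ge c_n \ge \cdots$, a uniform peak location independent of the particular $\varepsilon_i$. No heavy computation is needed; the content is entirely in the sliding-window interpretation of multiplication by $[n]_q$.
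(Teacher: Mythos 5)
Your proof is correct and is essentially the paper's own argument: expanding the product (your ``sliding window'') shows $c_0 \le c_1 \le \cdots \le c_{n-1}$ and $c_n \ge c_{n+1} \ge \cdots \ge c_{2n-1}$, which yields unimodality no matter how $c_{n-1}$ compares with $c_n$. One small correction to your bookkeeping: at the junction $k=n-1$ both window endpoints lie in the support range of $p_{n+1}$, so $c_n - c_{n-1} = \varepsilon_n - \varepsilon_0$ rather than $\varepsilon_n$ or $-\varepsilon_0$; hence the peak need not be at $k=n-1$ (it is at $k=n$ when $\varepsilon_n=1$ and $\varepsilon_0=0$), but this does not affect the unimodality conclusion, and the paper records exactly this difference at the end of its proof.
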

\begin{proof}
Observe that
$$(\varepsilon_0 + \varepsilon_1 q+ \varepsilon_2 q^2+...+ \varepsilon_n q^n)(1+q+q^2+...+q^{n-1})=$$
$$\varepsilon_0 + (\varepsilon_0 + \varepsilon_1)q + (\varepsilon_0 + \varepsilon_1 + \varepsilon_2)q^2+...+$$
$$(\varepsilon_0 + \varepsilon_1 + \varepsilon_2+...+\varepsilon_{n-1})q^{n-1} + (\varepsilon_1 + \varepsilon_2+...+\varepsilon_{n})q^n +$$
$$(\varepsilon_2+...+\varepsilon_{n})q^{n+1}+ (\varepsilon_3+...+\varepsilon_{n})q^{n+2}+...+ \varepsilon_{n})q^{2n-1}.$$
Therefore, if $p_{n+1}(q) [n]_q= \sum_{i=0}^{2n-1}{c_{i}q^{i}}$ then
$$c_0 \leq c_1 \leq ... \leq c_{n-1} \mbox{ and } c_n\geq c_{n+1} \geq ... \geq c_{2n-1}.$$
From this we conclude that $p_{n+1}(q) [n]_q$ is unimodal. Moreover, notice that
\[ c_n-c_{n-1}=\varepsilon_{n} - \varepsilon_{n-1}=
\left\{
\begin{array}{ll}
1 & \mbox{when $\varepsilon_{n}=1$, $\varepsilon_{0}=0$}, \\
0 & \mbox{when $\varepsilon_{n}=\varepsilon_{0}$}, \\
-1 & \mbox{when $\varepsilon_{n}=0$, $\varepsilon_{0}=1$}.
\end{array}
\right.
\]
\end{proof}

The proof of the previous theorem also leads to the result that $p_{n+1} [n]_q$ is strongly unimodal if and only if there are $i\leq j$ such that $\varepsilon_i=\varepsilon_{i+1}=\cdots=\varepsilon_{j}=1$.

\begin{corollary}\label{CoroPn} If $p_{n+1}(q)$ is symmetric then $p_{n+1}(q) [n]_q!$ is unimodal.
\end{corollary}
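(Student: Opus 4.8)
The plan is to exploit the factorization $p_{n+1}(q)\,[n]_q! = \big(p_{n+1}(q)\,[n]_q\big)\cdot [n-1]_q!$ and to show that each of the two factors is \emph{palindromic} (i.e.\ symmetric) and \emph{unimodal}; the corollary then follows from the classical fact that a product of two palindromic unimodal polynomials with nonnegative coefficients is again palindromic and unimodal.

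First, dispose of the trivial case $p_{n+1}\equiv 0$. Otherwise, ``$p_{n+1}$ symmetric'' means $\varepsilon_i=\varepsilon_{n-i}$ for all $i$, equivalently $p_{n+1}(q)=q^{n}p_{n+1}(1/q)$; also $[n]_q=1+q+\cdots+q^{n-1}$ satisfies $[n]_q=q^{n-1}[n]_q(1/q)$ and is weakly unimodal. Multiplying, $p_{n+1}(q)[n]_q=q^{2n-1}\big(p_{n+1}(q)[n]_q\big)(1/q)$ is palindromic, and by Proposition~\ref{Proposition 3.3} it is unimodal. Next, $[n-1]_q!=\prod_{k=1}^{n-1}[k]_q$ is a product of the palindromic unimodal polynomials $[k]_q$, hence (by the same classical product fact) palindromic and unimodal. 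Thus $p_{n+1}(q)[n]_q!$ is a product of two palindromic unimodal polynomials, so it is palindromic and unimodal; in particular it is unimodal.

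The one step that does real work is the lemma that a product $A(q)B(q)$ of palindromic unimodal polynomials with nonnegative coefficients is palindromic and unimodal; this is standard and is in fact already invoked implicitly in Theorem~\ref{antiunimodalformula}, where a product of quantum integers is declared unimodal. If an explicit argument is wanted, I would write a palindromic unimodal $R(q)$ with center $c$ as a nonnegative combination of the ``intervals'' $B_{c,j}(q):=q^{j}+q^{j+1}+\cdots+q^{2c-j}=q^{j}[\,2c-2j+1\,]_q$ (the coefficient of $B_{c,0}$ being the constant term of $R$, and that of $B_{c,j}$ the $j$-th positive difference of the coefficient sequence of $R$). Then $A(q)B(q)$ becomes a nonnegative combination of products $q^{j+k}[\,a\,]_q[\,b\,]_q$; each such product is a monomial times $[a]_q[b]_q$, whose coefficient sequence $1,2,\dots,\min(a,b),\dots,\min(a,b),\dots,2,1$ is palindromic and unimodal, and all of these products have the \emph{same} center $c_A+c_B$. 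Since a sum of palindromic unimodal polynomials sharing one center is palindromic and unimodal, the lemma follows. I do not anticipate any real difficulty beyond this lemma: one only needs the mild observation that vanishing leading or trailing $\varepsilon_i$ shift the effective center but affect neither palindromicity nor unimodality, and that the coefficient computation in the proof of Proposition~\ref{Proposition 3.3} holds for every $\varepsilon_i\in\{0,1\}$.
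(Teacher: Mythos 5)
Your proposal is correct and follows essentially the same route as the paper: factor $p_{n+1}(q)[n]_q!$ as $\bigl(p_{n+1}(q)[n]_q\bigr)\cdot[n-1]_q!$, get unimodality of the first factor from Proposition~\ref{Proposition 3.3} and symmetry of both factors, and then invoke the classical fact that a product of symmetric unimodal polynomials is symmetric and unimodal. The only difference is that you supply a self-contained proof of that classical lemma, which the paper simply cites to \cite{Win,Sta}.
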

\begin{proof} By Proposition \ref{Proposition 3.3} 
$p_{n+1}(q) [n]_q$ is unimodal. Moreover, it is also symmetric as a product of symmetric polynomials. Thus, by a classical result\footnote{See, for instance, \cite{Win,Sta}.} that the product of symmetric unimodal polynomials is symmetric and unimodal, we  conclude that  $p_{n+1}(q) [n]_q!$ is unimodal.
\end{proof}

Observe that the same method we used in Proposition \ref{Proposition 3.3} can be applied to the polynomial $p_{n+2}(q)[n+1]_q[n]_q$ and succeed with some effort. However, our main focus is the polynomial $p_{n+1}(q) [n]_q!$. To this effect we have:
$$[n+1]_q[n]_q= 1+2q+3q^2+...+ nq^{n-1}+nq^n+ (n-1)q^{n+1}+...+ 2q^{2n-2}+ q^{2n-1}.$$

After some laborious calculation for 
$p_{n+2}(q)[n+1]_q[n]_q$ we obtain:

\begin{proposition}
The polynomial $p_{n+2}(q)[n+1]_q[n]_q$ is unimodal. Here  $p_{n+2}(q)$ is an arbitrary polynomial $p_{n+2}(q)= \varepsilon_0 + \varepsilon_1 q+..+ \varepsilon_{n+1}q^{n+1}$, where $\varepsilon_i = 0$ or $1$. \\
More precisely we can show that if $p_{n+2}(q)[n+1]_q[n]_q=\sum_{i=0}^{3n} c_iq^i$ then
$$c_0 \leq c_1 \leq ... \leq c_n \leq ... \leq c_{n+j} \geq ... \geq c_{2n} \geq c_{2n-1} \geq ... \geq c_{3n-1}\geq c_{3n},$$
where $j$ is the smallest number, $0\leq j \leq n$  such that $$c_{n+j+1}-c_{n+j}= -\varepsilon_{0}-\varepsilon_{1} - \varepsilon_{j} +\hat\varepsilon_{j+1}+ \varepsilon_{j+2}+...+ \varepsilon_{n} + \varepsilon_{n+1} \leq 0.$$
\end{proposition}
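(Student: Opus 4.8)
The plan is to realize the ``same method'' alluded to just after Proposition~\ref{Proposition 3.3}, organized around a telescoping trick. Write $p_{n+2}(q)[n+1]_q[n]_q=\sum_{i=0}^{3n}c_iq^i$. First I would record the elementary identity $(1-q)[n+1]_q=1-q^{n+1}$, which gives
\[
(1-q)\,p_{n+2}(q)[n+1]_q[n]_q=(1-q^{n+1})\,p_{n+2}(q)[n]_q .
\]
Setting $p_{n+2}(q)[n]_q=\sum_m a_m q^m$ and comparing the coefficient of $q^k$ on the two sides yields, for every $k$,
\[
c_k-c_{k-1}=a_k-a_{k-n-1},
\]
with the convention $a_m=0$ for $m<0$ or $m>2n$. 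Since $[n]_q=1+q+\cdots+q^{n-1}$, the coefficient $a_m$ is just the windowed sum $\varepsilon_{m-n+1}+\varepsilon_{m-n+2}+\cdots+\varepsilon_m$, truncated to indices in $\{0,1,\dots,n+1\}$; so each $a_m$ is a non-negative integer, and the $a_m$ are the ``sliding windows of width $n$'' of the bit string $(\varepsilon_i)$.

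Next I would evaluate $a_k-a_{k-n-1}$ in three ranges of $k$. For $1\le k\le n$ one has $k-n-1<0$, so $a_{k-n-1}=0$ and $c_k-c_{k-1}=a_k\ge 0$; this already gives $c_0\le c_1\le\cdots\le c_n$. For $k=n+t$ with $1\le t\le n$, the window computing $a_{n+t}$ clips on the right to $\{t+1,\dots,n+1\}$ and the one computing $a_{t-1}$ clips on the left to $\{0,\dots,t-1\}$, whence
\[
c_{n+t}-c_{n+t-1}=(\varepsilon_{t+1}+\varepsilon_{t+2}+\cdots+\varepsilon_{n+1})-(\varepsilon_0+\varepsilon_1+\cdots+\varepsilon_{t-1});
\]
for $t=j+1$ with $0\le j\le n-1$ this is exactly the displayed quantity $-\varepsilon_0-\varepsilon_1-\cdots-\varepsilon_j+\hat\varepsilon_{j+1}+\varepsilon_{j+2}+\cdots+\varepsilon_{n+1}$. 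Finally, for $k=n+t$ with $t\ge n+1$ (that is, $k\ge 2n+1$) the right window is empty, so $a_{n+t}=0$ and $c_k-c_{k-1}=-a_{t-1}\le 0$.

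Then I would assemble the chain. From the middle formula, for $1\le t\le n-1$,
\[
(c_{n+t}-c_{n+t-1})-(c_{n+t+1}-c_{n+t})=\varepsilon_t+\varepsilon_{t+1}\ge 0,
\]
so the differences $c_{n+1}-c_n,\ \dots,\ c_{2n}-c_{2n-1}$ form a non-increasing sequence. Combining this with the sign information from the other two ranges, the sequence $(c_k-c_{k-1})$ is $\ge 0$ on $1\le k\le n$, non-increasing on $n+1\le k\le 2n$, and $\le 0$ on $2n+1\le k\le 3n$; such a sequence consists of an initial block of non-negative terms followed by a block of non-positive terms, which is precisely unimodality of $(c_i)$. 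Taking $j$ to be the smallest element of $\{0,1,\dots,n\}$ with $c_{n+j+1}-c_{n+j}\le 0$ (it exists because $c_{2n+1}-c_{2n}=-(\varepsilon_1+\cdots+\varepsilon_n)\le 0$), minimality forces $c_n\le c_{n+1}\le\cdots\le c_{n+j}$ while the monotonicity forces $c_{n+j}\ge c_{n+j+1}\ge\cdots\ge c_{3n}$, giving the asserted chain together with the formula for $c_{n+j+1}-c_{n+j}$ read off in the second paragraph.

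The step I expect to be the real work is the second one: pinning down the clipping of the two sliding windows in each range, and checking that the first-difference sequence fails to be globally non-increasing only at the two ``seams'' $k=n$ and $k=2n$ — and that at those seams the differences already carry the sign needed (at $k=n$ because $c_n-c_{n-1}=a_n\ge 0$, at $k=2n$ because $c_{2n+1}-c_{2n}\le 0$ regardless), so that unimodality survives. Once the bookkeeping of the window endpoints is in place, everything reduces to the one-line inequality $\varepsilon_t+\varepsilon_{t+1}\ge 0$.
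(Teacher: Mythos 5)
Your argument is correct, and it is organized quite differently from the paper's. The paper proves this proposition by brute force: it multiplies $p_{n+2}(q)$ against the explicit expansion $[n+1]_q[n]_q=1+2q+\cdots+nq^{n-1}+nq^n+\cdots+q^{2n-1}$, writes out every coefficient $c_i$ as an explicit weighted sum of the $\varepsilon_i$, and then reads off the consecutive differences (the displayed computation in the paper is long and contains several typographical slips). You instead never compute the $c_i$ themselves: the identity $(1-q)[n+1]_q=1-q^{n+1}$ converts the problem into the statement $c_k-c_{k-1}=a_k-a_{k-n-1}$, where the $a_m$ are sliding-window sums of the bit string, and the whole sign analysis reduces to the three window regimes plus the one-line monotonicity estimate $(c_{n+t}-c_{n+t-1})-(c_{n+t+1}-c_{n+t})=\varepsilon_t+\varepsilon_{t+1}\ge 0$. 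I checked the window bookkeeping: on $1\le k\le n$ you get $d_k=a_k\ge 0$; on $k=n+t$, $1\le t\le n$, you get $d_k=\sum_{i=t+1}^{n+1}\varepsilon_i-\sum_{i=0}^{t-1}\varepsilon_i$, which at $t=j+1$ is exactly the quantity in the statement; on $k\ge 2n+1$ you get $d_k=-a_{k-n-1}\le 0$; and your observation that the two seams at $k=n$ and $k=2n$ need no monotonicity because the signs are already forced there is exactly the point that makes the assembly work. What your route buys is a genuinely rigorous and checkable proof of the chain in the Proposition (including the characterization of $j$), whereas the paper's expansion asserts the conclusion after a computation that is hard to verify term by term; what the paper's route buys is the explicit formula for each $c_i$, which your method deliberately avoids. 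Your argument also generalizes transparently to $p_{n+k}(q)[n+k-1]_q\cdots[n]_q$ by iterating the same telescoping step, which the remark preceding Proposition \ref{Proposition 3.3} only gestures at.
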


\begin{proof} we have
\begin{eqnarray*}
  p_{n+2}(q)[n+1]_q[n]_q &=& (\varepsilon_0 + \varepsilon_1 q+\cdots+ \varepsilon_{n+1}q^{n+1})\cdot \\&& (1+2q + 3q^2 +\cdots+nq^{n-1}+nq^n + (n-1)q^{n+1}\\&& +\cdots+2q^{2n-2}+q^{2n-1})\\&&= \varepsilon_0 + (2\varepsilon_0 + \varepsilon_1)q + (3\varepsilon_0 + 2\varepsilon_1 +\varepsilon_2)q^2\\&& +\cdots
+ (n\varepsilon_0 +(n-1)\varepsilon_1 +\cdots + \varepsilon_{n-1})q^{n-1}\\&& +(n\varepsilon_0 +n\varepsilon_1+ (n-1)\varepsilon_2+\cdots+\varepsilon_n)q^n\\&& +
((n-1)\varepsilon_0 +n\varepsilon_1 + n\varepsilon_2 +(n-1)\varepsilon_3 +\cdots +\varepsilon_{n+1})q^{n+1}\\&& +((n-2)\varepsilon_0 +(n-1)\varepsilon_1 + n\varepsilon_2 +(n)\varepsilon_3 +\cdots+2\varepsilon_{n+1})q^{n+2}+\cdots\\&&+ 2\varepsilon_0 + 3\varepsilon_1 +\cdots+n\varepsilon_{n-2}+n\varepsilon_{n-1}+ (n-1)\varepsilon_{n}+ (n-2)\varepsilon_{n+1})q^{2n-2}\\&&+\varepsilon_0 + 2\varepsilon_1 +\cdots+n\varepsilon_{n-1}+n\varepsilon_{n}+ (n-1)\varepsilon_{n}+ (n-1)\varepsilon_{n+1})q^{2n-1}\\&&+\varepsilon_1 + 2\varepsilon_2 +\cdots+(n-1)\varepsilon_{n-1}+n\varepsilon_{n}+ n\varepsilon_{n+1})q^{2n-1}\\&&+\varepsilon_2 + 2\varepsilon_3 +\cdots+(n-1)\varepsilon_{n}+n\varepsilon_{n+1})q^{2n}+ \cdots \\&& 
+(2\varepsilon_{n+1}+ \varepsilon_{n})q^{3n-1}+  \varepsilon_{n+1}q^{3n}.
\end{eqnarray*}

From this we get in particular that: $$c_{n+j+1}-c_{n+j}= -\varepsilon_{0}-\varepsilon_{1} - \varepsilon_{j} +\hat\varepsilon_{j+1}+\varepsilon_{j+2}+...+ \varepsilon_{n} + \varepsilon_{n+1} \leq 0$$ and finally the unimodality of $p_{n+2}(q)[n+1]_q[n]_q$.
\end{proof}

\section{Calculations and Examples}\label{calculationsExamples}

\begin{example}
Consider the hedgehog rooted tree $T=114\cdots411$ ($k$-times 4) denoted by $1^{2}4^{k}1^{2}$. See Figure \ref{fig:threekhedge}. We show that for $k=2, 3, 4, 5, 6$ the plucking polynomial is not unimodal. Concrete calculations are given below.

\begin{figure}[ht]
    \centering
    $$(T, f) = \vcenter{\hbox{
\begin{overpic}[scale = .8]{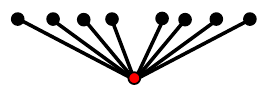}
\put(3,34){\fontsize{10}{10}$1$}
\put(17,34){\fontsize{10}{10}$1$}
\put(29,34){\fontsize{10}{10}$4$}
\put(40,34){\fontsize{10}{10}$4$}
\put(46,34){\fontsize{10}{10}$\cdots$}
\put(59,34){\fontsize{10}{10}$4$}
\put(69,34){\fontsize{10}{10}$4$}
\put(80,34){\fontsize{10}{10}$1$}
\put(94,34){\fontsize{10}{10}$1$}
\end{overpic} }}$$
    \caption{Hedgehog rooted tree $T=114\cdots411$.}
    \label{fig:threekhedge}
\end{figure}

\begin{itemize} 
 \item [(a)] $T=1^24^21^2$:
\begin{eqnarray*}
Q(T, f) &=& q^{13} + 5q^{12} + 12q^{11} + 18q^{10} + 19q^9 + 17q^8\\
 && + 17q^7 + 19q^6 + 18q^5 + 12q^4 + 5q^3 + q^2\\
 &=&
  q^2 (1 + q)^3 (1 + q + q^2)^2 (1 + q^4) \\
 &=&q^2[2]^{2}_q[3]_q[3]_q!(1 + q^4).
\end{eqnarray*}
 \item [(b)] $T=1^24^31^2$:
 \begin{eqnarray*}
Q(T, f) &=& q^{18} + 6q^{17} + 18q^{16} + 36q^{15} + 53q^{14} + 61q^{13} + 59q^{12} + 54q^{11} \\
&&+ 54q^{10} + 59q^9 + 61q^8 + 53q^7 + 36q^6 + 18q^5 + 6q^4 + q^3 \\
  &=&  q^3 (1 + q)^5 (1 + q^2) (1 + q + q^2)^2 (1 - q + q^2 - q^3 + q^4) \\
  &=& q^{3}[2]^{2}_{q}[3]_{q}[4]_{q}!(1+q^5).
\end{eqnarray*}
 \item [(c)] $T=1^24^41^2$:
 \begin{eqnarray*}
       Q(T,f) &=& q^{24} + 7q^{23} + 25q^{22} + 61q^{21} + 114q^{20} + 173q^{19} + 221q^{18} \\
       &&+ 245q^{17} + 245q^{16} + 234q^{15} + 228q^{14} + 234q^{13} + 245q^{12} \\
       &&+ 245q^{11} + 221q^{10} + 173q^9 + 114q^8 + 61q^7 + 25q^6 + 7q^5 + q^4 \\
 &=& q^4 (1 + q)^4 (1 + q^2)^2 (1 + q + q^2)^2 (1 - q^2 + q^4) (1 + q + 
   q^2 + q^3 + q^4)\\
   &=& q^4[2]^{2}_{q}[3]_{q}[5]_{q}!(1+q^6).
   \end{eqnarray*} 

\item [(d)] $T=1^24^51^2$:
\begin{eqnarray*}
	Q(T, f) &=& q^5 + 8 q^6 + 33 q^7 + 94 q^8 + 208 q^9 + 381 q^{10} + 600 q^{11} + 
 832 q^{12} + 1034 q^{13}\\
 && + 1171 q^{14} + 1232 q^{15} + 1234 q^{16} + 
 1212 q^{17} + 1200 q^{18} + 1212 q^{19} + 1234 q^{20} \\
 && + 1232 q^{21}+ 
 1171 q^{22} + 1034 q^{23} + 832 q^{24} + 600 q^{25} + 381 q^{26} + 208 q^{27} + 
 94 q^{28}\\
 && + 33 q^{29} + 8 q^{30} + q^{31} \\
 &=&q^5[2]^{2}_{q}[3]_{q}[6]_{q}!(1+q^7).
	\end{eqnarray*}
 \item [(e)] $T=1^24^61^2$:
    \begin{eqnarray*}
       Q(T, f) &=& q^{39} + 9q^{38} + 42q^{37} + 136q^{36} + 344q^{35} + 725q^{34} + 1325q^{33} + 2155q^{32} + 3174q^{31}\\&& + 4287q^{30} + 5364q^{29} + 6276q^{28} + 6934q^{27} + 7315q^{26} + 7465q^{25} \\&& + 7477q^{24} + 7451q^{23} + 7451q^{22} + 7477q^{21} + 7465q^{20} + 7315q^{19} \\&&+ 6934q^{18} + 6276q^{17}+ 5364q^{16} + 4287q^{15}  + 3174q^{14} + 2155q^{13} + 1325q^{12} + 725q^{11} \\&&+ 344q^{10} + 136q^9 + 42q^8 + 9q^7 + q^6\\
 &=&q^6[2]^{2}_{q}[3]_{q}[7]_{q}!(1+q^8).
    \end{eqnarray*}


\end{itemize}

We have checked that the polynomial is unimodal for $k=1, 7, 8, 9, 10$ and we conjecture that is unimodal for any $k>6$. In particular for $k=7$ we get:

\begin{eqnarray*}
    Q(T, f) &=& q^{48} + 10q^{47} + 52q^{46} + 188q^{45} + 532q^{44} + 1257q^{43} + 2582q^{42} + 4737q^{41} \\
    &&+ 7909q^{40} + 12179q^{39} + 17468q^{38} + 23514q^{37} + 29896q^{36} + 36105q^{35} \\
    &&+ 41645q^{34} + 46137q^{33} + 49397q^{32} + 51464q^{31} + 52566q^{30} + 53031q^{29} \\
    &&+ 53170q^{28} + 53170q^{27} + 53031q^{26} + 52566q^{25} + 51464q^{24} + 49397q^{23} \\
    &&+ 46137q^{22} + 41645q^{21} + 36105q^{20} + 29896q^{19} + 23514q^{18} + 17468q^{17} \\
    &&+ 12179q^{16} + 7909q^{15} + 4737q^{14} + 2582q^{13} + 1257q^{12} + 532q^{11} + 188q^{10} \\
    &&+ 52q^9 + 10q^8 + q^7 \\
    &=& q^7[2]^{2}_{q}[3]_{q}[8]_{q}!(1+q^9).
\end{eqnarray*}
    
\end{example}

We can write in general the following proposition.
\begin{proposition}
Consider the hedgehog tree $T=114\cdots411$ ($k$-times 4).
Its plucking polynomial is given by the formula:
$$Q(T,f)= q^k[k+1]_q! (1+q)^2(1+q+q^2)(1+q^{k+2}).$$
\end{proposition}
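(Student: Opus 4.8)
The plan is to set up the delay function for $T = 1^2 4^k 1^2$ explicitly, identify it as a special delay function in the sense of Subsection~\ref{specialdelaysection} only after reducing the ``$4$'' labels down to $1$'s step by step, and then read off the closed form. Concretely, the tree has $n = k+4$ leaves; order them left to right as $v_1, v_2$ (labels $1$), then $v_3, \dots, v_{k+2}$ (labels $4$), then $v_{k+3}, v_{k+4}$ (labels $1$). The delay function is \emph{not} anti-unimodal (it has the profile $1,1,4,\dots,4,1,1$, so $-f$ is not unimodal), and its image is $\{1,4\}$, not $\{1,2\}$, so neither Theorem~\ref{antiunimodalformula} nor Proposition~\ref{pluckingdelay12} applies directly. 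So the first step is to apply the recursive formula of Definition~\ref{delayfunction} with $L_1(T) = \{v_1, v_2, v_{k+3}, v_{k+4}\}$.

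The key observation, exactly as in the proofs of Theorem~\ref{antiunimodalformula} and Proposition~\ref{pluckingdelay12}, is that $(T-v, f_v)$ is independent of the choice of $v \in L_1(T)$: removing any one of the four outer leaves leaves a hedgehog on $k+3$ leaves whose delay function has profile (in the appropriate order) $1,1,4,\dots,4,1$ or $1,4,\dots,4,1,1$ — in any case one label $1$ is consumed and the four-labels each drop by $1$ to become $3$'s only after enough iterations; more precisely $f_v$ sends the surviving outer $1$'s to $1$ and the $4$'s to $\max\{1, 4-1\} = 3$. Thus I would factor out the geometric sum $\sum_{v \in L_1(T)} q^{r(T,v)}$, which by the same computation as in Theorem~\ref{antiunimodalformula} equals $q^{r(T,v')}[\,|L_1(T)|\,]_q$-type expression — here I must be careful: the four leaves of $L_1$ are not contiguous (two on the far left, two on the far right), so the sum of $q^{r(T,v)}$ over them is $q^{r(T, v'_{\mathrm{right}})}(1 + q)$ from the right pair plus $q^{r(T, v'_{\mathrm{left}})}(1+q)$ from the left pair, and $r$ for the left pair exceeds $r$ for the right pair by exactly the number of $4$-labelled leaves, i.e.\ by $k$. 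So this factor is $(1+q)(q^{a} + q^{a+k})$ for an explicit shift $a$, i.e.\ $(1+q)(1+q^k) \cdot q^a$ up to the monomial. Iterating this until all the $4$'s have been reduced to $1$'s, and then invoking $Q(T_{1,\dots,1}) = [m]_q!$ for the remaining trivial-delay hedgehog, assembles the product.

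An alternative, and probably cleaner, route is to induct on $k$: show that passing from the tree $1^2 4^{k} 1^2$ to $1^2 4^{k-1} 1^2$ (or to a tree with one fewer leaf) multiplies $Q$ by a single quantum integer times a controlled monomial, matching the recursion $q^k[k+1]_q!(1+q)^2(1+q+q^2)(1+q^{k+2})$ against $q^{k-1}[k]_q!(1+q)^2(1+q+q^2)(1+q^{k+1})$; the ratio is $q \cdot [k+1]_q \cdot \frac{1+q^{k+2}}{1+q^{k+1}}$, which is not a polynomial, signalling that the one-step recursion must peel off two leaves at a time or compare $1^2 4^k 1^2$ with $1^2 4^k 1$ first. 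In practice I would verify the base case $k=1$ (where the tree is $1^2 4\, 1^2$, six leaves, and the formula predicts $q[2]_q!(1+q)^2(1+q+q^2)(1+q^3) = q(1+q)^3(1+q+q^2)(1+q^3)$, which one checks by hand against Definition~\ref{delayfunction}), and then run the reduction described above, which organizes the whole computation as: remove the two outer $1$-leaves on each side (contributing $(1+q)^2$ and monomial shifts), thereby reaching a pure hedgehog $T_{1,\dots,1}$ on $k+2$ leaves with a uniform residual delay function, whose polynomial is $[k+2]_q! = [k+1]_q!\,[k+2]_q$ — but note $[k+2]_q = \frac{1+q^{k+2}}{\,\cdot\,}$ is not how $(1+q^{k+2})$ arises, so the $(1+q^{k+2})$ factor must instead come from the asymmetry between the left and right pairs of outer leaves, i.e.\ from the $q^0 + q^k$ pattern upgraded by one more plucking step to $1 + q^{k+2}$.

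The main obstacle I anticipate is precisely bookkeeping the powers of $q$: the factor $(1 + q^{k+2})$ is the delicate piece, since it does \emph{not} factor as a quantum integer and therefore cannot come from a $Q(T_{1,\dots,1}) = [m]_q!$ term; it must be tracked as the sum $q^{r(\text{left outer})} + q^{r(\text{right outer})}$ accumulated over the two rounds of plucking the four outer $1$-leaves, where the exponent gap is $k$ in the first round and increases to $k+2$ after the surviving inner structure is accounted for. Getting that gap exactly right — i.e.\ showing it is $k+2$ and not $k$, $k+1$, or $k+3$ — and confirming the overall monomial prefactor is $q^k$, is where I would spend the most care; everything else is the now-routine ``factor the geometric sum, recurse on a smaller hedgehog'' argument that already appears in the proofs of Theorem~\ref{antiunimodalformula} and Proposition~\ref{pluckingdelay12}.
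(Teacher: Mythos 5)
Your overall plan --- pluck the four outer delay-$1$ leaves over three rounds until the middle $4$'s decay to $1$'s, land on a trivial-delay hedgehog worth a $q$-factorial, and track the powers of $q$ --- has the same shape as the paper's computation (the branching diagram of Figure \ref{proof14k1}). But the step you lean on to turn this into a one-line product is false. You assert, ``exactly as in the proofs of Theorem \ref{antiunimodalformula} and Proposition \ref{pluckingdelay12},'' that $(T-v,f_v)$ is independent of the choice of $v\in L_1(T)$, so that $\sum_{v\in L_1(T)}q^{r(T,v)}$ factors out against a single $Q(T-v,f_v)$. Here $L_1(T)$ consists of two leaves on the far left and two on the far right; removing a left leaf yields the profile $1\,3^k\,1\,1$ while removing a right leaf yields $1\,1\,3^k\,1$. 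These are mirror images, and the plucking polynomial with delay function is \emph{not} mirror-invariant: already for $k=1$ one computes $Q(1\,1\,3\,1)=q(1+q)^2(1+q+q^3)$ but $Q(1\,3\,1\,1)=(1+q)^2(1+q^2+q^3)$. So the factorization fails at the very first step (and again at each of the next two rounds). The correct first step is the two-branch identity
$$Q(T,f)=(q^{k+3}+q^{k+2})\,Q(1\,3^k\,1\,1)+(1+q)\,Q(1\,1\,3^k\,1),$$
and one must carry both branches and their sub-branches separately through three rounds of plucking before they recombine into the symmetric product $q^k[k+1]_q!(1+q)^2(1+q+q^2)(1+q^{k+2})$; that branching computation is precisely what the paper records and then expands.

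Two further bookkeeping errors would also need repair. The exponent gap between the left and right outer pairs is $k+2$ (their $r$-values are $k+3,k+2$ versus $1,0$), not $k$ as you state; and after three rounds the surviving tree is a trivial-delay hedgehog on $k+1$ leaves contributing $[k+1]_q!$ (matching the stated formula), not one on $k+2$ leaves contributing $[k+2]_q!$. You flag both points as places of uncertainty, but as written the proposal is a strategy sketch whose central simplification is invalid and whose computation is never carried out, so it does not yet constitute a proof of the proposition.
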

\begin{proof} 
\begin{figure}[ht]
\centering
$$\vcenter{\hbox{
\begin{overpic}[scale = .7]{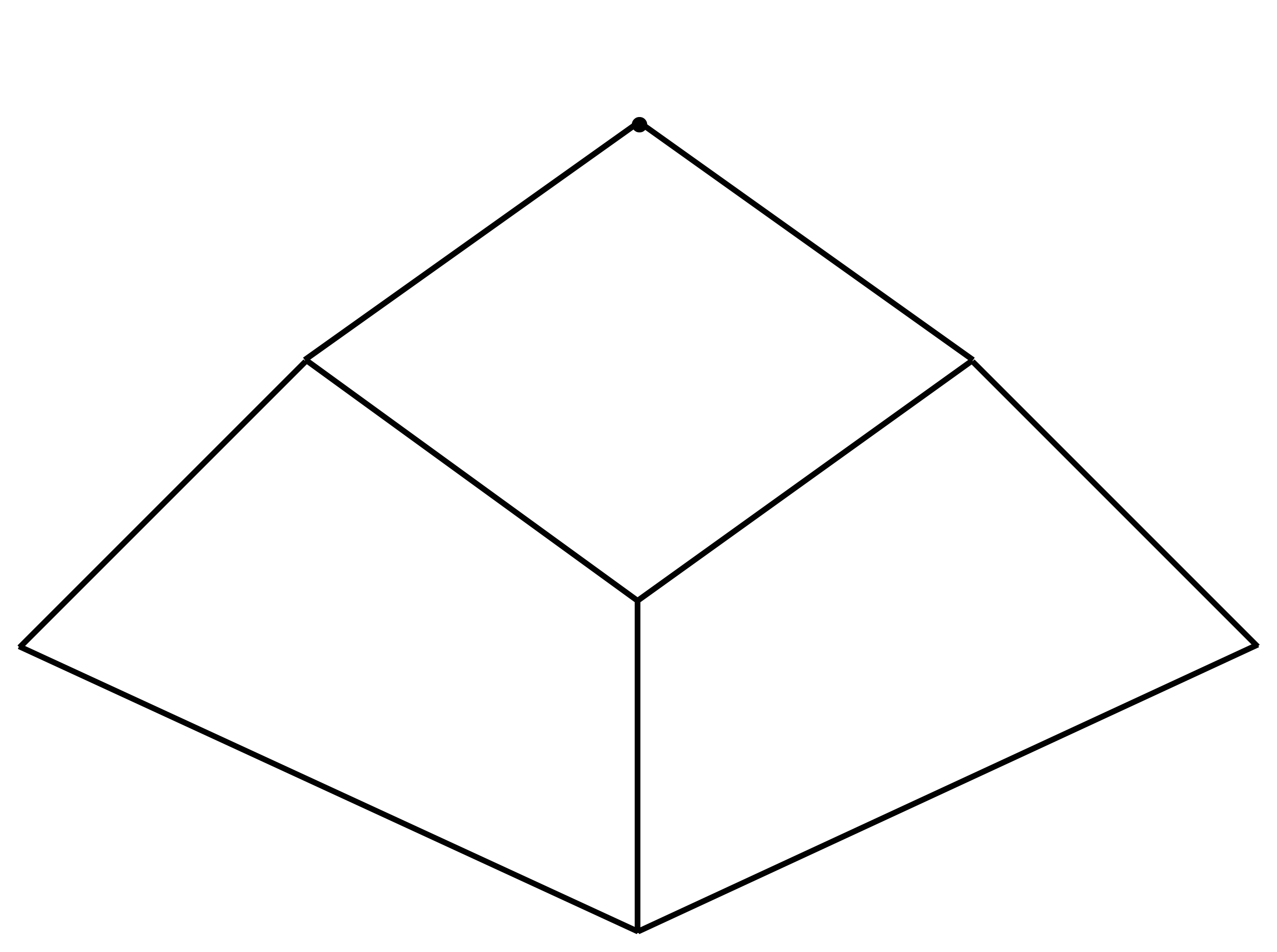}
\put(165,155){$1+q$}
\put(45,155){$q^{k+2}+q^{k+3}$}
\put(15,95){$q^{k+2}$}
\put(77,117){$1+q$}
\put(130,110){$q^{k+1}+q^{k+2}$}
\put(235,95){$1$}
\put(135,50){$1+q^{k+1}$}
\put(40,50){$1+q$}
\put(210,30){$q^{k}+q^{k+1}$}
\put(115,-8){$[k+1]_{q}!$}
\end{overpic} }}$$
\caption{Computation of $Q(T,f)$ for the tree $T=114\cdots411$ ($k$-times 4).}
	\label{proof14k1}
\end{figure}
\begin{eqnarray*}
Q(T, f) &=& (q^{k+3} + q^{k+2})(q^{k+2}(q+1)[k+1]_q!+(1+q)(q^{k+1}+1)[k+1]_q!)\\
&&+ (q+1)((q^{k+2}+q^{k+1})(q^{k+1}+1) [k+1]_q! + (q^{k+1}+q^{k})[k+1]_q!) \\
&=& [k+1]_{q}!(1+q)^{2}(q^{k}+q^{2k+4}+(1+q^{k+1})(q^{k+2}+q^{k+1})) \\&=& [k+1]_{q}!(1+q)^{2}(q^k)(1+q^{k+4}+q^{2}+q+q^{k+3}+q^{k+2})\\&=&q^{k}[k+1]_{q}!(1+q)^{2}(1+q+q^2)(1+q^{k+2}).
      \end{eqnarray*}


\end{proof}

\begin{example}
    Consider the hedgehog rooted tree denoted by $1^a3^k1^b$, that is 
 $a$-times 1, $k$-times 3, and $b$ times 1. Then $$Q(T_{1^a3^k1^b},f)=$$
 $$[k+a+b-2]_q!\bigg((q^{k+b-1}+q^{k+b})[a]_q[b]_q+ q^{2k+2b}[a]_q[a-1]_q + [b]_q[b-1]_q\bigg).$$
 The proof of the formula for this example is explained in Figure \ref{Calculation-1a3k1b}. 
\end{example}

\begin{figure}[ht]
\centering
$$\vcenter{\hbox{
\begin{overpic}[scale = .6]{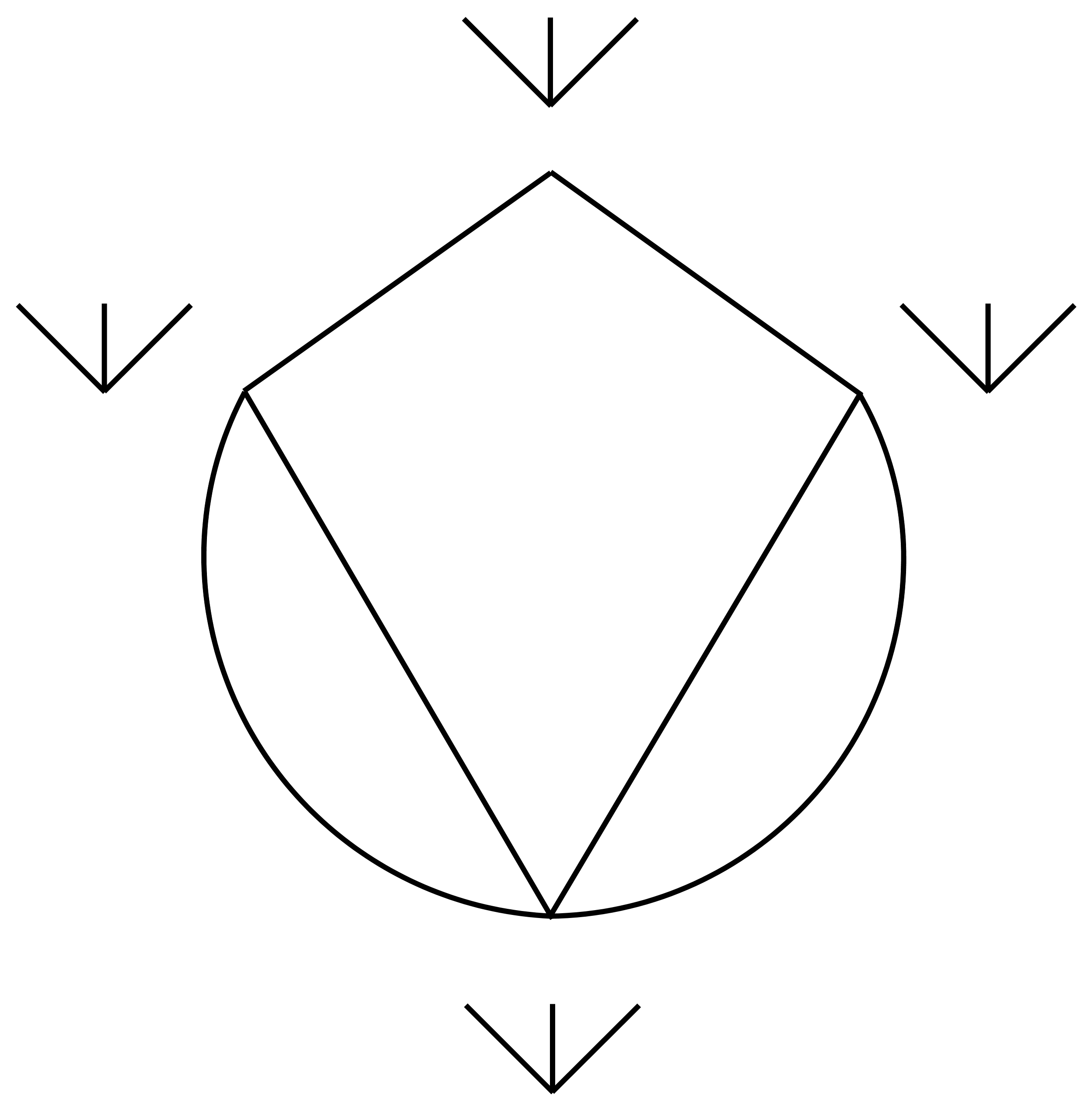}
\put(87,218){$1^a$}
\put(105,218){$3^k$}
\put(119,218){$1^b$}
\put(172,162){$1^a$}
\put(190,162){$2^k$}
\put(210,162){$1^{b-1}$}
\put(-7,162){$1^{a-1}$}
\put(18,162){$2^{k}$}
\put(35,162){$1^{b}$}
\put(52,177){$q^{k+b}[a]_{q}$}
\put(126,177){$[b]_{q}$}
\put(-18,70){$q^{k+b}[a-1]_{q}$}
\put(55,95){$[b]_{q}$}
\put(88,95){$q^{k+b-1}[a]_{q}$}
\put(175,70){$[b-1]_{q}$}

\put(135,10){$[k+a+b-2]_{q}!$}

\end{overpic} }}$$
\caption{Calculation graph for $1^a3^k1^b$. On the bottom we have hedgehog rooted tree $1^{k+a+b-2}$ with the plucking polynomial $[k+a+b-2]_q!$.}
\label{Calculation-1a3k1b}
\end{figure}

\begin{remark}
Another example of a hedgehog rooted tree with delay function and not unimodal plucking polynomial was given by Wojciech Garstka \cite{Ga}. It is a hedgehog rooted tree of type $21412$, see Figure \ref{Garstka1}, with the plucking polynomial
$$q^2(1+q)^2(1+2q+2q^3+q^4)=q^2+4q^3+5q^4+4q^5+5q^6+4q^7+q^8.$$
\end{remark}

\begin{figure}[h]
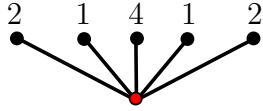

\centering
$$\vcenter{\hbox{\begin{overpic}[scale = .8]{Tree5hedge}
\put(3,35){$2$}
\put(29,35){$1$}
\put(49,35){$4$}
\put(69,35){$1$}
\put(94,35){$2$}
\end{overpic}}}$$
\caption{The hedgehog rooted tree of type $21412$.}
\label{Garstka1}
\end{figure}
\ \\
Furthermore, type $214412$ is also not unimodal. However, types $1214121$, $112141211$, $2114112$ and $211141112$ are  unimodal. 

\section{Future work}\label{FutureWork}
The plucking polynomial is of particular interest to knot theorists given its connections to the Kauffman bracket skein relations. Experimental calculation suggests the unimodality of the plucking polynomial with delay function values $1$ or $2$. Thus, we propose the following conjecture.

\begin{conjecture}\label{unimodconjecture}(Unimodality Conjecture)
    Let $T = T_{1, 1, \cdots, 1}$ be the Hedgehog (star) rooted tree with n rays and let $f$ be a delay function with values $1$ or $2$. Then its plucking polynomial with delay function $Q(T, f)$, is unimodal. 
\end{conjecture}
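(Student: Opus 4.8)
The plan is to prove the Unimodality Conjecture by reducing it, via Proposition~\ref{pluckingdelay12}, to a purely combinatorial statement about the polynomials $p_n(q)[n-1]_q!$, where $p_n(q) = \sum_{i=0}^{n-1}\varepsilon_i q^i$ with each $\varepsilon_i \in \{0,1\}$. Since $[n-1]_q! = [n-1]_q [n-2]_q \cdots [2]_q [1]_q$, the task is to show that multiplying $p_n(q)$ by this particular product of quantum integers always yields a unimodal polynomial, even though $p_n(q)$ itself is an essentially arbitrary $0/1$ polynomial. The strategy is an induction on $n$: write $[n-1]_q! = [n-1]_q \cdot [n-2]_q!$ and try to show that if $r_n(q) := p_n(q)[n-2]_q!$ has some controlled shape (not just unimodality, but unimodality together with quantitative bounds on how fast consecutive coefficients can grow or decay), then multiplying by $[n-1]_q = 1 + q + \cdots + q^{n-2}$ restores a similarly controlled shape.

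First I would set up the right induction hypothesis. Plain unimodality is too weak to be stable under multiplication by $[n-1]_q$ together with an arbitrary $0/1$ perturbation coming from the next $\varepsilon$. Following the pattern already visible in Proposition~\ref{Proposition 3.3} and the proposition on $p_{n+2}(q)[n+1]_q[n]_q$ --- where the authors track not only the peak location but the sign of $c_{n+j+1}-c_{n+j}$ in terms of partial sums of the $\varepsilon_i$ --- I would carry along the explicit formula for the coefficients as sliding-window sums. Concretely, if $[n-1]_q! = \sum_k d_k^{(n)} q^k$, then the coefficient of $q^m$ in $p_n(q)[n-1]_q!$ is $\sum_{i} \varepsilon_i d_{m-i}^{(n)}$, and the difference of consecutive coefficients telescopes into $\sum_i \varepsilon_i (d_{m-i}^{(n)} - d_{m-i-1}^{(n)})$. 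Because $[n-1]_q!$ is symmetric and unimodal (a classical fact, cited in Corollary~\ref{CoroPn}), its coefficient differences $d_{k}^{(n)}-d_{k-1}^{(n)}$ are nonnegative up to the center and nonpositive past it; the heart of the argument is to show that the $\varepsilon$-weighted combination of these differences changes sign exactly once as $m$ increases. I would prove this by exploiting the log-concavity of $[n-1]_q!$ (it is a product of the log-concave polynomials $[k]_q$, hence its coefficient sequence has no internal zeros and is log-concave), which forces the difference sequence $d_k^{(n)}-d_{k-1}^{(n)}$ to be itself ``single-signed after a threshold'' in a strong enough sense that convex $0/1$-combinations of its shifts inherit the single-sign-change property.

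The key steps, in order, are: (1) record the coefficientwise formula for $p_n(q)[n-1]_q!$ and its first difference; (2) establish the structural facts about $[n-1]_q!$ --- symmetry, unimodality, and log-concavity/no-internal-zeros of its coefficient sequence, and translate these into monotonicity and sign properties of the difference sequence $\Delta_k := d_k^{(n)}-d_{k-1}^{(n)}$; (3) show that for any $0/1$ weights $\varepsilon_i$ supported on $\{0,\dots,n-1\}$, the function $m \mapsto \sum_i \varepsilon_i \Delta_{m-i}$ is nonnegative for $m$ below some threshold and nonpositive above it --- this is the crux and is where I expect the real work to lie; (4) conclude unimodality of $p_n(q)[n-1]_q!$, and hence, by Proposition~\ref{pluckingdelay12}, of $Q(T_{1,\dots,1},f)$ for every delay function with values in $\{1,2\}$.

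The main obstacle is Step (3). Multiplication by an arbitrary $0/1$ polynomial can, in principle, create a new ``bump'' on the left or right side of the unimodal profile of $[n-1]_q!$ --- this is exactly what happens in the counterexamples such as $21412$ and $1^2 4^k 1^2$ with small $k$, where the delay values exceed $2$ and the relevant factor is more like $[m]_q!(1+q^{\,\ell})$ for a large gap $\ell$, allowing the shifted copy to detach. The conjecture asserts that with delay values only $1$ and $2$ the perturbing polynomial $p_n(q)$ has degree at most $n-1$, which is strictly less than the ``radius'' $\binom{n-1}{2}$ of the support of $[n-1]_q!$; the proof must leverage precisely this inequality --- that the window width $n$ is small compared to how slowly $[n-1]_q!$ rises near its ends --- to rule out a detached secondary peak. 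Making that quantitative comparison rigorous, presumably via an explicit lower bound on the initial coefficient differences $\Delta_1 \le \Delta_2 \le \cdots$ of $[n-1]_q!$ (which grow at least linearly for a while, dominating the at most $n$ competing $\pm 1$ contributions from the $\varepsilon_i$), is where the argument will stand or fall, and I expect it to require a careful case analysis near the two ends of the coefficient range together with the classical symmetric-unimodal product lemma to glue the two halves.
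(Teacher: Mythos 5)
Note first that the statement you are trying to prove is stated in the paper as an open conjecture: the authors give no proof, report only computational verification up to $8$ leaves, and explicitly say that the analysis of the polynomials $p_n(q)[n-1]_q!$ ``is still open.'' Your reduction of the conjecture, via Proposition~\ref{pluckingdelay12}, to the unimodality of $p_n(q)[n-1]_q!$ for arbitrary $0/1$ polynomials $p_n$ of degree at most $n-1$ is exactly the reduction the paper itself performs, and the two cases the paper does settle (Proposition~\ref{Proposition 3.3} for $p_{n+1}(q)[n]_q$ and the subsequent proposition for $p_{n+2}(q)[n+1]_q[n]_q$) are precisely the first two instances of the induction you propose. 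So your framing is faithful to the paper; the problem is that everything beyond those two instances is left undone.

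The genuine gap is your Step (3), and you candidly flag it yourself: you never prove that the $\varepsilon$-weighted sliding sum $m \mapsto \sum_i \varepsilon_i \Delta_{m-i}$ of the first differences of the coefficients of $[n-1]_q!$ changes sign at most once. Log-concavity of $[n-1]_q!$ cannot by itself deliver this: convolving a log-concave, symmetric, unimodal sequence with a $0/1$ sequence need not give a unimodal result (already $(1+q^3)(1+q)$ fails), so the argument must rest entirely on the quantitative interplay between the window width $n$ and the growth rate of the initial differences of $[n-1]_q!$ --- and that comparison is asserted, not carried out. Your heuristic for why it should work is also not quite right as stated: you claim $\deg p_n = n-1$ is ``strictly less than the radius $\binom{n-1}{2}$'' of the support of $[n-1]_q!$, but $\binom{n-1}{2}$ is the full degree, not the radius, and for small $n$ (e.g.\ $n=4$, where $\deg [3]_q! = 3$) the window is as wide as the entire support, so no such domination argument is available there. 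A correct proof would need either a genuinely new idea for controlling the sign changes of these windowed difference sums, or a workable strengthened induction hypothesis; as written, the proposal restates the conjecture in convolution language and stops at the point where the paper's authors also stopped.
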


The conjecture has been verified up to 8 leaves. 

\ 

One could also investigate whether the plucking polynomial with delay function having values $1, 2,$ or $3$ is always unimodal. As of now, there is not enough data to propose it as a conjecture. 

\ 

The following open problem relates the plucking polynomial with knot theory via the Kauffman bracket of the lattice crossings (see \cite{DP}).
\begin{openproblem}
    Is the plucking polynomial with delay function unimodal when the delay function is anti-unimodal?
\end{openproblem}

\section*{Acknowledgements}
The first author acknowledges the support by the Australian Research Council through grant DP210103136. The third author acknowledges the support of the National Science Foundation through Grant DMS-2212736. The fourth author was partially supported by the Simons Collaboration Grant 637794.



\begin{thebibliography}{999999999}
	
\bibitem[CMPWY1]{CMPWY1} Z. Cheng, S. Mukherjee, J. H. Przytycki, X. Wang and S. Y. Yang, Realization of plucking polynomials,  {\it Journal
of Knot Theory and Its Ramifications}, 26 (2017), 1740016 (9 pages).

\bibitem[CMPWY2]{CMPWY2} Z. Cheng, S. Mukherjee, J. H. Przytycki, X. Wang, S. Y. Yang, Strict unimodality of $q$-polynomials of rooted trees, {\it Journal of Knot Theory and Its Ramifications}, 27(7), June 2018, 1841009 (19 pages). e-print: \href{https://arxiv.org/abs/1601.03465}{arXiv:1601.03465 [math.CO] }.

\bibitem[CMPWY3]{CMPWY3} Z. Cheng, S. Mukherjee, J. H. Przytycki, X. Wang, S. Y. Yang,
Rooted trees with the same plucking polynomials,
    {\it Osaka Journal of Mathematics}, 56, 2019, 661--674. e-print: \href{https://arxiv.org/abs/1702.02004}{arXiv:1702.02004 [math.GT]}.

\bibitem[DLP]{DLP} M. K. Dabkowski, C. Li, J. H. Przytycki, Catalan states of lattice crossing,
\textit{Topology and its Applications}, 182, March, 2015, 1-15. e-print: \href{https://arxiv.org/abs/1409.4065}{arXiv:1409.4065 [math.GT]}.


\bibitem[DP]{DP} M. K. Dabkowski, J. H. Przytycki, Catalan states of lattice crossing: an application of the plucking polynomial, \textit{Topology and its applications}, 254, 12-28, 2019. e-print: \href{https://arxiv.org/pdf/1711.05328.pdf}{arXiv:1711.05328 [math.GT]}.



\bibitem[Gar]{Ga} W. Garstka, private correspondence with J. H. Przytycki. (Plucking polynomial computational project) December 2022.

\bibitem[Prz]{Prz} J.H. Przytycki, q-polynomial invariant of rooted trees; \textit{Arnold Mathematical Journal}, 2(4), 449-461, 2016. e-print: \href{https://arxiv.org/abs/1512.03080}{arXiv:1512.03080 [math.CO]}.

\bibitem[PBIMW]{PBIMW} J. H. Przytycki, R. P. Bakshi, D. Ibarra, G. Montoya-Vega,  D. Weeks,  Lectures in Knot Theory: An Exploration of Contemporary Topics, {\it Springer Universitext}, 
to be published, January 2024.
	
\bibitem[Sta]{Sta}
R.~P.~Stanley, Log-concave and unimodal sequences in algebra, combinatorics, and geometry,
{\it Ann. New York Sci.}, 576, New York Acad. Sci., New York, 1989, 500-535.

\bibitem[Win]{Win}
 A. Wintner, Asymptotic Distributions and Infinite Convolutions, \textit{Edvards
Brothers}, Ann Arbor, Michigan, 1938.

	
	\end{thebibliography}
	\end{document}